\journal{Journal of Scientific Computing}
\newtheorem{theorem}{Theorem}[section]
\newtheorem{lemma}{Lemma}[section]
\newtheorem{remark}{Remark}[section]
\numberwithin{equation}{section}
\numberwithin{figure}{section}
\numberwithin{table}{section}
\def\XXint#1#2#3{{\setbox0=\hbox{$#1{#2#3}{\int}$}
\vcenter{\hbox{$#2#3$}}\kern-.51\wd0}}
\newcommand{\reff}[1]{{\rm (\ref{#1})}}
\begin{document}

\setlength{\pdfpageheight}{\paperheight}
\setlength{\pdfpagewidth}{\paperwidth}
\title{Energy stable semi-implicit schemes for Allen-Cahn-Ohta-Kawasaki Model in Binary System}
\author{Xiang Xu}
\address{Department of Mathematics, Old Dominion University, Norfolk, VA 23529}
\author{Yanxiang Zhao\fnref{myfootnote}}
\address{Department of Mathematics, George Washington University, Washington D.C., 20052}
\fntext[myfootnote]{Corresponding author: yxzhao@email.gwu.edu}

\begin{abstract}
In this paper, we propose a first order energy stable linear semi-implicit method for solving the Allen-Cahn-Ohta-Kawasaki equation. By introducing a new nonlinear term in the Ohta-Kawasaki free energy functional, all the system forces in the dynamics are localized near the interfaces which results in the desire hyperbolic tangent profile. In our numerical method, the time discretization is done by some stabilization technique in which some extra nonlocal but linear term is introduced and treated explicitly together with other linear terms, while other nonlinear and nonlocal terms are treated implicitly. The spatial discretization is performed by the Fourier collocation method with FFT-based fast implementations. The energy stabilities are proved for this method in both semi-discretization and full discretization levels. Numerical experiments indicate the force localization and desire hyperbolic tangent profile due to the new nonlinear term. We test the first order temporal convergence rate of the proposed scheme. We also present hexagonal bubble assembly as one type of equilibrium for the Ohta-Kawasaki model. Additionally, the two-third law between the number of bubbles and the strength of long-range interaction is verified which agrees with the theoretical studies.
\end{abstract}

\begin{keyword}
Ohta-Kawasaki model, energy stability, penalty method, Fourier collocation, bubble assembly.
\end{keyword}

\date{\today}
\maketitle


\section{Introduction}\label{sec:Introduction}

Ohta-Kawasaki (OK) model introduced in  \cite{OhtaKawasaki_Macromolecules1986} has been used for the simulations of the phase separation of diblock copolymers, which are chain molecules consisting of two different and chemically incompatible segment species, say $A$ and $B$ species, connected by a covalent chemical bond. Diblock copolymers have generated much interest in materials science in recent years due to their remarkable ability for self-assembly into nanoscale ordered structures. This ability can be exploited to create materials with desired mechanical, optical electrical, and magnetic properties \cite{Hamley_Wiley2004}. The Ohta-Kawasaki model describes the binary system such as diblock copolymers by a free energy functional written as follows
\begin{align}\label{eqn:OKenergy}
E^{\text{OK}}[\phi] = \int_{\Omega} \left[ \dfrac{\epsilon}{2}|\nabla \phi|^2 + \dfrac{1}{\epsilon}W(\phi) \right] dx + \dfrac{\gamma}{2} \int_{\Omega} \Big|(-\Delta)^{-\frac{1}{2}}\Big(f(\phi)-\omega\Big)\Big|^2\ dx ,
\end{align}
where $0<\epsilon\ll 1$ is an interface parameter, $\Omega\subset \mathbb{R}^d, d = 2, 3$ is a spatial domain, and $\phi = \phi(x)$ is a phase field labeling function which represents the density of $A$ species. The concentration of $B$ species can be implicitly represented by $1-\phi(x)$ since the system is assumed to be incompressible \cite{OhtaKawasaki_Macromolecules1986}. $W(\phi) = 18 (\phi^2-\phi)^2$ is a double well potential which enforces the labeling function $\phi(x)$ to be equal to 0 or 1 in most of the domain $\Omega$ except the interfacial region. A new term of $f(\phi)= 6\phi^5-15\phi^4+10\phi^3$ is adapted to mimic $\phi$ as the indicator for the $A$ species. The first integral in (\ref{eqn:OKenergy}) is a local surface energy. The second term indicates the long-range interaction between the chain molecules with $\gamma>0$ being the strength of such an interaction. $\omega\in(0,1)$ is the relative volume of the $A$ species which indicates that the Ohta-Kawasaki model is usually associated with a volume constraint:
\begin{align}\label{eqn:vol_constraint}
\int_{\Omega} f(\phi)\ dx = \omega|\Omega|.
\end{align}
The negative square root of $-\Delta$ is defined in Section 2 in details. The  Allen-Cahn-Ohta-Kawasaki (ACOK) equation, the main focus in this paper, is resulted from the energy variation of the energy functional (\ref{eqn:OKenergy}) in the $L^2$ space coupled with the volume constraint (\ref{eqn:vol_constraint}). 

The main contribution of this paper lies in several aspects. Firstly, the new form of $f(\phi)$ will introduce a more localized boundary force near interfaces which will result in more desired tanh profile for the numerical solutions and consequently lead to more accurate free energy estimate. A more detailed discussion on the motivation of such a new $f(\phi)$ will be presented in Section 1.1. Secondly, due to the new nonlinear term $f(\phi)$ and the nonlocal interactions, we adapt a linear splitting method and the corresponding linear semi-implicit scheme for the $L^2$ gradient flow dynamics. Such a scheme is advantageous of treating all nonlocal nonlinear terms explicitly so that Fourier spectral method can be used to solve the fully discrete system efficiently when coupled with periodic boundary condition, and more importantly, it will inherit the energy dissipation law (energy stability) obeyed by the continuous $L^2$ gradient flow. Thirdly, our numerical simulations will validate the functionality of $f(\phi)$ in our model, and present the stability and efficiency of the proposed numerical schemes.

\subsection{Motivation of the new form $f(\phi)$}

The new form of $f(\phi)$
\begin{align}
f(\phi) = 6\phi^5-15\phi^4+10\phi^3
\end{align}
is due to several reasons. Firstly, heuristically, with such a function, we have not only $f(0)=0, f(1)=1$ which resembles the behavior of $\phi$, but also that
\begin{align}
f'(0) = 0\quad \text{and}\ f'(1) = 0.
\end{align}
These will lead to a more localized 'boundary force' near the $A$-$B$ interface (for example, see the last two terms of (\ref{eqn: OKgradient})). On the other hand, a more traditional choice $f(\phi) = \phi$ with $f'(\phi) = 1$ will induce a local surface tension force (the first two terms on the right hand side of (\ref{eqn: OKgradient})) against a global repulsive force (the last term on the right hand side of (\ref{eqn: OKgradient})). To balance such two forces, the $\phi$ might lose the desired tanh profile and results in either unphysical negative values in the proximity of the interface (see \cite{Zhao_2018CMS}) or values not equal 0 or 1 away from the interface (see Figure \ref{fig:phi_comparison}). Secondly, since the new $f(\phi)$ results in a much better tanh profile, consequently the new model will describe the interfacial structures more accurately and lead to a better estimate of the free energy. Thirdly, we can have a smooth linear extension of $f(\phi)$ at 0 and 1 as shown in the equation \reff{f_modified} so that the energy stability can be inherited later in Theorem \ref{theorem:semidiscrete_energystable} and Theorem \ref{theorem:fullydiscrete_energystable}.  Lastly, the choice of such a $f(\phi)$ is motivated by the following prototypical example, Ginzburg-Landau functional
\begin{align}\label{GinzburgLandau}
E^{GL}[\phi] = \int_{\Omega}\left[\dfrac{\epsilon}{2}|\nabla\phi|^2 + \dfrac{1}{\epsilon} W(\phi)\right] dx 
\end{align} 
subject to the volume constraint (\ref{eqn:vol_constraint}). The Euler-Lagrange equation of \reff{GinzburgLandau} and \reff{eqn:vol_constraint} reads:
\begin{align}\label{EulerLagrange}
-\epsilon\Delta\phi + \dfrac{1}{\epsilon}W'(\phi) + \lambda f'(\phi) = 0
\end{align}
where $\lambda$ is the Lagrange multiplier for the volume constraint. 

Following the work of \cite{DuLiuRyhamWang}, let us assume that $\phi_{\epsilon}$, a solution of the constrained minimization \reff{GinzburgLandau} and \reff{eqn:vol_constraint}, has an asymptotic expansion
\begin{align}\label{AsymExp}
\phi_{\epsilon} = q\left(\dfrac{d(x)}{\epsilon/3}\right) + \epsilon h\left(\dfrac{d(x)}{\epsilon/3}\right) + g,
\end{align}
where $q, h\in C^2(\Omega)$ are independent of $\epsilon$, $\|\nabla^k q\|_{L^{\infty}}=O(1)$, $\|\nabla^k h\|_{L^{\infty}}=O(1)$, $\|\nabla^k g\|_{L^{\infty}}= o(\epsilon)$ for $k=0,1,2$. The function $d(x) = \text{dist}(x,\Gamma_{\epsilon})$ with $\{\Gamma_{\epsilon}\}$ being a family of class $C^2$ compact surfaces converging uniformly to $\Gamma_0$. Applying the similar idea as in \cite{DuLiuRyhamWang}, it yields
\begin{align*}
\lambda = \lambda_0 + o(1).
\end{align*}
Furthermore, inserting the asymptotic expansion \reff{AsymExp} into \reff{EulerLagrange}, we can compare the $\epsilon^{-1}$ term to obtain 
\begin{align*}
q(\cdot) = 0.5 + 0.5 \tanh(\cdot),
\end{align*}
namely, up to $O(1)$, $\phi_{\epsilon}$ becomes 1 inside and 0 outside when away from the interface. 
For the $\epsilon^0$ term, we have
\begin{align}\label{epsilon0term}
-9 h''\left(\dfrac{d(x)}{\epsilon/3}\right) + W''\left(q\left(\dfrac{d(x)}{\epsilon/3}\right)\right) - &6\left(q\left(\dfrac{d(x)}{\epsilon/3}\right)-q\left(\dfrac{d(x)}{\epsilon/3}\right)^2\right)\Delta d(x) \nonumber \\
 + &30\left(q\left(\dfrac{d(x)}{\epsilon/3}\right)-q\left(\dfrac{d(x)}{\epsilon/3}\right)^2\right)^2\lambda_0 = 0.
\end{align}
When $x\in\Gamma_{\epsilon}$, we have $\Delta d(x) = 2H(x)$, and all other terms in \reff{epsilon0term} reduce to constants. Therefore, $H(x) \equiv H_{\epsilon}$ and the interface $\Gamma_{\epsilon}$ becomes a sphere. More importantly, when away from $\Gamma_{\epsilon}$, equation \reff{epsilon0term} reduces to $-9h'' + 36h=0$. Using the assumption of $h$, it leads to the solution $h=0$. Consequently, up to $O(\epsilon)$, $\phi_{\epsilon}$ remains as 1 inside and 0 outside when away from the interface $\Gamma_{\epsilon}$.

As a comparison, if we take $f(\phi) = \phi$, the $\epsilon^0$ equation reads
\begin{align*}
-9 h''\left(\dfrac{d(x)}{\epsilon/3}\right) + W''\left(q\left(\dfrac{d(x)}{\epsilon/3}\right)\right) - 6\left(q\left(\dfrac{d(x)}{\epsilon/3}\right)-q\left(\dfrac{d(x)}{\epsilon/3}\right)^2\right)\Delta d(x) + \lambda_0 = 0.
\end{align*}
When away from the interface, it reduces to $-9h'' + 36h + \lambda_0 = 0$ and the solution is $h =\lambda_0/36$. Therefore, up to $O(\epsilon)$, $\phi_{\epsilon}$ is not equal to 1 inside and 0 outside any more, but rather has a small deviation of $O(\epsilon \lambda_0/36)$. Such a deviation is also presented in the numerical simulation in Figure \ref{fig:phi_comparison}. Of course, the deviation of $\phi_{\epsilon}$ can be mitigated by letting $\epsilon\rightarrow0$ by the theoretical study (for instance as in \cite{LiZhao_SIAM2013}). However, in the real applications, especially in the 3D simulations, $\epsilon$ has to remain relatively large to reduce the computational cost. Therefore, the new $f(\phi)$ is advantageous for keeping the hyperbolic tangent profile of $\phi_{\epsilon}$ and localizing the forces only near the interfaces even for a relatively large $\epsilon$.

\begin{remark}
Using the penalty formulation, we can change the constrained minimization problem \reff{GinzburgLandau} and \reff{eqn:vol_constraint} into an unconstrained one: 
\begin{align}
\min_{\phi} E^{\text{GL}}_{M} [\phi] = E^{\text{GL}} [\phi] + \frac{M}{2}\left(\int_{\Omega} f(\phi) dx - \omega|\Omega|\right)^2
\end{align}
with $M\gg1$ being a penalty constant. Then solving the corresponding Euler-Lagrange equation at equilibrium:
\begin{align}\label{eqn:EL_penalty}
-\epsilon\Delta\phi + \dfrac{1}{\epsilon}W'(\phi) + M\left(\int_{\Omega} f(\phi) dx - \omega|\Omega|\right)f'(\phi) = 0
\end{align}
leads to an approximation of the Lagrange multiplier formulation (\ref{EulerLagrange}) with $\lambda \approx M\left(\int_{\Omega} f(\phi) dx - \omega|\Omega|\right)$, which becomes equivalent at the limit of $M\rightarrow \infty$. More importantly, $f'(\phi)$ in (\ref{eqn:EL_penalty}) localizes the volume force near the interface, which can be perfectly balanced by the tension force while maintaining the desired tanh profile. 
\end{remark}

\subsection{$L^2$ gradient flow dynamics}

Using the Lagrange multiplier approach, the $L^2$ gradient flow for the energy (\ref{eqn:OKenergy}) subject to the volume constraint (\ref{eqn:vol_constraint})  can be written as
\begin{align}\label{eqn: OKgradient}
\phi_t = - \dfrac{\delta E^{\text{OK}}[\phi]}{\delta \phi} = \epsilon\Delta\phi - \frac{1}{\epsilon}W'(\phi) - \gamma(-\Delta)^{-1}(f(\phi)-\omega) f'(\phi)  - \lambda(t) f'(\phi),
\end{align}
where $\lambda(t)$ is some appropriate time-dependent Lagrange multiplier such that the volume constraint (\ref{eqn:vol_constraint}) always holds.

Since the ACOK equation, as a $L^2$ gradient dynamics, is energy dissipative, it is natural to develop energy stable numerical scheme for it. To see how the volume constraint plays a role in the energy stability, we incorporate the penalty term into (\ref{eqn:OKenergy}) and change it into an unconstrained one:
\begin{align}\label{eqn:OKenergy_penalty}
E[\phi] = \int_{\Omega} \left[ \dfrac{\epsilon}{2}|\nabla \phi|^2 + \dfrac{1}{\epsilon}W(\phi) \right] dx + \dfrac{\gamma}{2} \int_{\Omega} \Big|(-\Delta)^{-\frac{1}{2}}\Big(f(\phi)-\omega\Big)\Big|^2 dx + \dfrac{M}{2} \left( \int_{\Omega}f(\phi) dx - \omega|\Omega| \right)^2,
\end{align}
and consider the corresponding penalized ACOK (pACOK) equation:
\begin{align}\label{eqn:OKgradientflow_penalized}
\dfrac{d\phi}{dt} = - \dfrac{\delta E[\phi]}{\delta\phi} =  \epsilon \Delta\phi - \dfrac{1}{\epsilon} W'(\phi) - \gamma (-\Delta)^{-1}(f(\phi)-\omega)f'(\phi) - M \left( \int_{\Omega}f(\phi) dx - \omega|\Omega| \right)f'(\phi) \ .
\end{align}
There has been extensive studies on the energy stable methods for various gradient flow dynamics. One example is the convex splitting method \cite{Eyre_Proc1998} in which the double well potential $W(\phi)$ is split into the sum of a convex function and a concave one, and the convex part is treated implicitly and the concave one is treated explicitly. However, a nonlinear system usually needs to be solved at each time step which induces high computational cost. Another widely adapted method is the stabilized semi-implicit method \cite{XuTang_SINA2006,ShenYang_DCDSA2010} in which $W(\phi)$ is treated explicitly. A linear stabilizing term is added to maintain the energy stability.  Another recent method is the IEQ method \cite{ChengYangShen_JCP2017, Yang_JCP2016} in which all nonlinear terms are treated semi-implicitly, the energy stability is preserved and the resulting numerical schemes lead to a symmetric positive definite linear system to be solved at each time step.

Due to the new nonlinear terms $f(\phi)$ and the nonlocal interactions, we will consider a linear splitting method for the OK energy functional and the resulting stabilized linear semi-implicit scheme for the pACOK equation such that all nonlocal nonlinear terms are treated explicitly. By doing so, we can solve the fully discrete system efficiently via Fourier spectral method. In this paper, we will firstly present our linear scheme and show the energy stability in semi-discretization level. Then we will discuss the energy stability for the full discretization with spectral discretization in space. Note that recently there has been several studies on the OK model for diblock copolymers. For example, \cite{Benesova_SINA2014} studies an implicit midpoint spectral approximation for the equilibrium of OK model. \cite{ChengYangShen_JCP2017} adopts the IEQ method to study the diblock copolymer model. However, the existing works mainly focus on the Cahn-Hilliard dynamics, namely, the $H^{-1}$ gradient flow dynamics of the OK energy (\ref{eqn:OKenergy}) with $f(\phi) = \phi$. Our work is advantageous of studying the force balance in the process of phase separation and emphasizing the force localization near the interface, and could have potential applications in other complex dynamics involving interfacial structures such as cell motility \cite{CamleyZhao_PRL2013} and implicit solvation \cite{Zhao_2018CMS}.

For the discussion of energy stability in Section 2 and Section 3, we revise the $W(s)$ quadratically and $f(s)$ linearly when $|s|>M_{\text{cut}}>0$ for some $M_{\text{cut}}$ in order to have finite upper bounds for $W''$ and $f''$. These modifications are necessary for the stabilized semi-implicit schemes for Ginzburg-Landau type dynamics \cite{ShenYang_DCDSA2010}. We will take the quadratic extension $\tilde{W}(\phi)$ of $W(\phi)$ as adapted in \cite{ShenYang_DCDSA2010} and other related citations therein. For the linear extension of $f(\phi)$, we choose:
\begin{align}\label{f_modified}
\tilde{f}(\phi) = 
\begin{cases}
0, & \phi<0,\\
f(\phi) = 6\phi^5 - 15 \phi^4 + 10\phi^3, & 0\le \phi \le 1, \\
1, & \phi>1.
\end{cases}
\end{align}
Such a linear extension will guarantee that $\tilde{f}(\phi)$ is Lipschitz continuous. We denote by $L_W, L_f$ the upper bounds of $|\tilde{W}''(s)|$ and $\tilde{f}''(s)$, respectively, and $L_p$ the Lipschitz constant for $\tilde{f}$. For brevity, we will still use $f(\phi), W(\phi)$ to represent $\tilde{f}(\phi), \tilde{W}(\phi)$, respectively.

The rest of the paper is organized as follows. In Section 2, we will introduce the penalty formulation of the OK model, then develop the first order (in time) stabilized linear semi-implicit scheme and analyze its energy stability in semi-discretization level. In Section 3, we will present the spectral discretization in space, and show the energy stability for the fully discretized scheme. In Section 4, we will give some numerical examples to illustrate the force localization, energy stability and the effect of the long-range repulsive force on the pattern formations.

\section{Ohta-Kawasaki Model in Penalty Formulation}

\subsection{Notations}

Let $\Omega = \prod_{i=1}^d [-X_i,X_i) \subset \mathbb{R}^d, d = 2,3$ be a periodic domain. Denote the space consisting of periodic functions in $H^s(\Omega), s\ge 0$ as $H^s_{\text{per}}(\Omega)$. We define the subspaces
\begin{align}
\mathring{H}^s_{\text{per}}(\Omega) := \bigg\{ u\in H^s_{\text{per}}(\Omega): \int_{\Omega} u(x) dx = 0 \bigg\}
\end{align}
consisting of all functions of $u\in H^s_{\text{per}}(\Omega)$ with zero mean. We use $\|\cdot\|_{H^s}$ to represent the standard Sobolev norm. When $s = 0$, $H^s(\Omega) = L^2(\Omega)$ and we take $\langle \cdot, \cdot \rangle$ as the $L^2$ inner product and $\|\cdot\|_{H^s} = \|\cdot\|_{L^2}$.

We define the inverse Laplacian $(-\Delta)^{-1}$: $\mathring{L}^2_{\text{per}}(\Omega)\rightarrow \mathring{H}^1_{\text{per}}(\Omega) $ as
\begin{align*}
(-\Delta)^{-1} g = u \Longleftrightarrow -\Delta u = g.
\end{align*}
or in term of Fourier series:
\begin{align}\label{eqn:inv_Lap}
 (-\Delta)^{-1}g = \sum_{k\in \mathbb{Z}^3\backslash\{0\}} |k|^{-2} \hat{g}(k)e^{\text{i} k\cdot \tilde{x}},
\end{align}
where 
\begin{align*}
\hat{g}(k) = \int_{\Omega}g(x)e^{-\text{i}k\cdot \tilde{x}}dx,\quad \text{with}\ \ \tilde{x} = (\pi x_1/X_1, \cdots, \pi x_d/X_d ).
\end{align*}
Note that the definition (\ref{eqn:inv_Lap}) can be extended to any function $g\in L_{\text{per}}^2(\Omega)$ because of the removal of the zero-th mode.

\subsection{Semi-discretization in time: a linear operator splitting method }

Now we will consider a semi-discrete scheme for the $L^2$ gradient flow dynamics of (\ref{eqn:OKenergy_penalty}), the pACOK equation (\ref{eqn:OKgradientflow_penalized}), and show that such a scheme will lead to energy stability under some constraints for the splitting coefficients. Given time interval $[0,T]$ and an integer $N>0$, we take the uniform time step size $\tau = \frac{T}{N}$ and $t_n = n\tau$ for $n=0,1,\cdots, N$.

Due to the nonlinearity and nonlocality in (\ref{eqn:OKenergy_penalty}) and (\ref{eqn:OKgradientflow_penalized}) , we consider a linear splitting of the energy functional (\ref{eqn:OKenergy_penalty}) as $E[\phi] = E_{l}[\phi] - E_{n}[\phi]$ with
\begin{align}\label{linear_splitting}
E_l[\phi] = &\int_{\Omega} \dfrac{\epsilon}{2}|\nabla \phi|^2 + \dfrac{\kappa}{2\epsilon}\phi^2  + \dfrac{\gamma}{2}\beta \left|(-\Delta)^{-\frac{1}{2}}\left(\phi-\omega\right)\right|^2 dx, \quad \text{and}\quad E_n[\phi] = E_l[\phi] - E[\phi].
\end{align}
Then treating $E_l$ implicitly and $E_n$ explicitly yields a first order linear semi-implicit scheme: for any $0\le n \le N-1$, find $\phi^{n+1}$ such that
\begin{align}\label{eqn:semi_implicit}
\dfrac{\phi^{n+1}-\phi^n}{\tau} = -\dfrac{\delta E_l}{\delta \phi} (\phi^{n+1}) + \dfrac{\delta E_n}{\delta \phi} (\phi^{n}),
\end{align}
with given initial data $\phi^0$, and splitting constants $\kappa$ and $\beta$. More precisely the scheme reads:
\begin{align}\label{eqn:semi_implicit2}
\dfrac{\phi^{n+1}-\phi^n}{\tau}  = &\ \epsilon \Delta \phi^{n+1} - \dfrac{\kappa}{\epsilon}\phi^{n+1} - \gamma\beta(-\Delta)^{-1}(\phi^{n+1}-\omega) + \dfrac{1}{\epsilon}\left[\kappa\phi^n-W'(\phi^n)\right] \nonumber \\
& + \gamma\left[\beta(-\Delta)^{-1}(\phi^n-\omega) - (-\Delta)^{-1}(f(\phi^n)-\omega)f'(\phi^n)\right] - M \left[\int_{\Omega} f(\phi^n)dx-\omega|\Omega|\right]f'(\phi^n).
\end{align}

We can rewrite the semi-discrete scheme (\ref{eqn:semi_implicit2}) as:
\begin{align}\label{eqn:semi_implicit3}
\left( \left(\dfrac{1}{\tau} + \dfrac{\kappa}{\epsilon} \right) I - \epsilon\Delta + \gamma\beta(-\Delta)^{-1}  \right)\phi^{n+1} = F^n 
\end{align}
with 
\begin{align*}
F^n =\ &\frac{\phi^n}{\tau} + \dfrac{1}{\epsilon}\left[\kappa\phi^n-W'(\phi^n)\right] 
+ \gamma\left[\beta(-\Delta)^{-1}(\phi^n-\omega) - (-\Delta)^{-1}(f(\phi^n)-\omega)f'(\phi^n)\right] \\
&- M \left[\int_{\Omega} f(\phi^n)dx-\omega|\Omega|\right]f'(\phi^n).
\end{align*}
Notice that $(-\Delta)^{-1}\phi^{n+1}$ in (\ref{eqn:semi_implicit3}) is well defined by removing the zero-th mode of $\phi^{n+1}$. A simple calculation reveals that all the eigenvalues of the operator on the left hand side of (\ref{eqn:semi_implicit3}) are positive. Therefore, the scheme is unconditionally uniquely solvable.

We show by the following theorem that the linear semi-implicit scheme (\ref{eqn:semi_implicit})  is energy stable, namely, it inherits the energy dissipation of (\ref{eqn:OKgradientflow_penalized}) as time increases. 
\begin{theorem}\label{theorem:semidiscrete_energystable}
The first-order convex splitting scheme (\ref{eqn:semi_implicit}) is energy stable: 
\begin{align}\label{eqn:energystability}
E[\phi^{n+1}] \le E[\phi^{n}],
\end{align}
provided that 
\begin{align}\label{eqn:kappa_beta}
\kappa \ge \frac{L_W}{2}  + \epsilon\left( \frac{\gamma L_f}{2}\|(-\Delta)^{-1}\|_{\infty}\max\{\omega,1-\omega \} + \frac{M}{2} |\Omega|\left( L_p^2 + L_f \max\{\omega,1-\omega \} \right) \right), \ \beta \ge \frac{L_p^2}{2}.
\end{align}
\end{theorem}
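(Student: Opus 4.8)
The plan is to run the convex‑splitting energy argument for $E[\phi]=E_l[\phi]-E_n[\phi]$, but keeping the \emph{exact} quadratic remainder of $E_l$ rather than only its convexity; this is what produces the factor $\tfrac12$ in the thresholds \reff{eqn:kappa_beta}. Write $v:=\phi^{n+1}-\phi^n$ and let $\mathcal{L}:=-\epsilon\Delta+\tfrac{\kappa}{\epsilon}I+\gamma\beta(-\Delta)^{-1}$ be the constant, self‑adjoint, positive semidefinite Hessian of the quadratic functional $E_l$, so that $\langle\mathcal{L}v,v\rangle=\epsilon\|\nabla v\|^2+\tfrac{\kappa}{\epsilon}\|v\|^2+\gamma\beta\|(-\Delta)^{-1/2}v\|^2\ge0$ (all norms $L^2(\Omega)$).

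\emph{Step 1 (reduction to a bilinear estimate).} Since $E_l$ is quadratic, $E_l[\phi^{n+1}]-E_l[\phi^n]=\langle\tfrac{\delta E_l}{\delta\phi}(\phi^{n+1}),v\rangle-\tfrac12\langle\mathcal{L}v,v\rangle$ exactly; and because the modified $W$ (whose quadratic extension need not match second derivatives) is only $C^{1,1}$, I would expand $E_n$ with the integral form of the Taylor remainder, $E_n[\phi^{n+1}]-E_n[\phi^n]=\langle\tfrac{\delta E_n}{\delta\phi}(\phi^n),v\rangle+\int_0^1(1-s)\langle\tfrac{\delta^2 E_n}{\delta\phi^2}(\phi^n+sv)v,v\rangle\,ds$. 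Subtracting and inserting the scheme \reff{eqn:semi_implicit} in the form $\tfrac{\delta E_l}{\delta\phi}(\phi^{n+1})-\tfrac{\delta E_n}{\delta\phi}(\phi^n)=-v/\tau$ yields
\[
E[\phi^{n+1}]-E[\phi^n]=-\frac{1}{\tau}\|v\|^2-\frac12\langle\mathcal{L}v,v\rangle-\int_0^1(1-s)\Big\langle\tfrac{\delta^2 E_n}{\delta\phi^2}(\phi^n+sv)\,v,\,v\Big\rangle\,ds .
\]
Since $\int_0^1(1-s)\,ds=\tfrac12$, it is enough to establish the pointwise‑in‑$s$ inequality $\langle\mathcal{L}v,v\rangle+\langle\tfrac{\delta^2 E_n}{\delta\phi^2}(\zeta)v,v\rangle\ge0$ for every $\zeta$ lying (pointwise) between $\phi^n$ and $\phi^{n+1}$: the last two terms above then sum to something $\le0$, and $E[\phi^{n+1}]-E[\phi^n]\le-\tfrac1\tau\|v\|^2\le0$, which is \reff{eqn:energystability}.

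\emph{Step 2 (local and mass‑penalty terms).} Using $\tfrac{\delta^2 E_n}{\delta\phi^2}=\mathcal{L}-\tfrac{\delta^2 E}{\delta\phi^2}$ and cancelling the $-\epsilon\Delta$ parts, one finds
\begin{multline*}
\langle\mathcal{L}v,v\rangle+\Big\langle\tfrac{\delta^2 E_n}{\delta\phi^2}(\zeta)v,v\Big\rangle
=\epsilon\|\nabla v\|^2+\frac{2\kappa}{\epsilon}\|v\|^2+2\gamma\beta\|(-\Delta)^{-1/2}v\|^2-\frac1\epsilon\!\int_\Omega W''(\zeta)v^2\\
-\gamma\|(-\Delta)^{-1/2}(f'(\zeta)v)\|^2-\gamma\big\langle(-\Delta)^{-1}(f(\zeta)-\omega),\,f''(\zeta)v^2\big\rangle
-M\Big(\!\int_\Omega f'(\zeta)v\Big)^2-M\Big(\!\int_\Omega f(\zeta)-\omega|\Omega|\Big)\!\int_\Omega f''(\zeta)v^2 .
\end{multline*}
I would discard $\epsilon\|\nabla v\|^2\ge0$ and estimate the four negative scalar/local terms separately: $\tfrac1\epsilon\int_\Omega W''(\zeta)v^2\le\tfrac{L_W}{\epsilon}\|v\|^2$; $\gamma\big|\langle(-\Delta)^{-1}(f(\zeta)-\omega),f''(\zeta)v^2\rangle\big|\le\gamma L_f\|(-\Delta)^{-1}(f(\zeta)-\omega)\|_{L^\infty}\|v\|^2\le\gamma L_f\|(-\Delta)^{-1}\|_\infty\max\{\omega,1-\omega\}\|v\|^2$ (since $0\le f\le1$); $M\big(\int_\Omega f'(\zeta)v\big)^2\le M|\Omega|L_p^2\|v\|^2$ by Cauchy--Schwarz and $|f'|\le L_p$; and, again from $0\le f\le1$ and $|f''|\le L_f$, $M\big(\int_\Omega f(\zeta)-\omega|\Omega|\big)\int_\Omega f''(\zeta)v^2\ge-M|\Omega|\max\{\omega,1-\omega\}\,L_f\|v\|^2$. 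The assumed lower bound for $\kappa$ in \reff{eqn:kappa_beta}, multiplied by $2/\epsilon$, is precisely the statement that $\tfrac{2\kappa}{\epsilon}\|v\|^2$ dominates the sum of these four contributions.

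\emph{Step 3 (the nonlocal quadratic term, the main obstacle).} It remains to absorb $\gamma\|(-\Delta)^{-1/2}(f'(\zeta)v)\|^2$ into the stabilizing reserve $2\gamma\beta\|(-\Delta)^{-1/2}v\|^2$. This is exactly why the linear term $\tfrac{\gamma\beta}{2}|(-\Delta)^{-1/2}(\phi-\omega)|^2$ was built into $E_l$ in \reff{linear_splitting} and why the threshold is $\beta\ge L_p^2/2$: what is needed is $\|(-\Delta)^{-1/2}(f'(\zeta)v)\|^2\le L_p^2\|(-\Delta)^{-1/2}v\|^2$, i.e. that multiplication by $f'(\zeta)$ — pointwise bounded by $L_p$ — does not inflate the $(-\Delta)^{-1/2}$‑seminorm by more than the factor $L_p$. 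This is the one step I expect to require real care, since multiplication by $f'(\zeta)$ and the smoothing operator $(-\Delta)^{-1/2}$ do not commute, and it is where the precise size of $\beta$ enters. Granting it, $2\gamma\beta\|(-\Delta)^{-1/2}v\|^2\ge\gamma L_p^2\|(-\Delta)^{-1/2}v\|^2\ge\gamma\|(-\Delta)^{-1/2}(f'(\zeta)v)\|^2$, which together with Step 2 gives $\langle\mathcal{L}v,v\rangle+\langle\tfrac{\delta^2 E_n}{\delta\phi^2}(\zeta)v,v\rangle\ge0$, and hence, by Step 1, the energy decay \reff{eqn:energystability}.
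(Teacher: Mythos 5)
Your proposal is correct and is essentially the paper's own argument: testing the scheme with $v=\phi^{n+1}-\phi^n$ and Taylor-expanding the nonlinear/nonlocal terms to second order is exactly what the paper does, only you package it as one functional expansion of $E=E_l-E_n$ with the integral-form remainder instead of the paper's term-by-term algebraic identities with Lagrange-form intermediate points $\xi^n,\eta^n$; the resulting bad terms and the thresholds \reff{eqn:kappa_beta} come out identically. The one step you flag in Step 3 as needing ``real care'' --- that multiplication by $f'(\zeta)$ with $|f'|\le L_p$ inflates the $(-\Delta)^{-1/2}$-seminorm by at most $L_p$ --- is precisely the inequality the paper also invokes, in the equivalent difference form $\|(-\Delta)^{-\frac12}(f(\phi^{n+1})-f(\phi^n))\|_{L^2}\le L_p\|(-\Delta)^{-\frac12}(\phi^{n+1}-\phi^n)\|_{L^2}$, without further justification; so your proof is no less complete than the paper's there, though you are right that a bounded multiplier is not automatically a bounded operator on this negative-order space with the same constant, and this point deserves either a proof or a restatement of the $\beta$-condition (e.g.\ absorbing that term into the $\|v\|_{L^2}^2$ reserve via the Poincar\'e constant instead).
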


\begin{proof}
Let 
\begin{align*}
g(\phi) &= \dfrac{\kappa}{2}\phi^2 - W(\phi),\\
g'(\phi) & = \kappa\phi - W'(\phi),\\
h(\phi) &=  \beta|(-\Delta)^{-\frac{1}{2}}(\phi-\omega)|^2 - | (-\Delta)^{-\frac{1}{2}}(f(\phi)-\omega) |^2,\\
h'(\phi) & = \beta(-\Delta)^{-1}(\phi-\omega) - (-\Delta)^{-1}(f(\phi)-\omega) f'(\phi),\\
v(\phi) & = \int_{\Omega} f(\phi) dx - \omega|\Omega|.
\end{align*}
Note that $h'(\phi)$ is not really the derivative of $h(\phi)$, but rather the variational derivative of $\int h(\phi) dx$.

Taking the $L^2$ inner product of (\ref{eqn:semi_implicit2}) with $\phi^{n+1}-\phi^{n}$ leads to 
\begin{align}\label{eqn:estimate1}
&\dfrac{1}{\tau} \|\phi^{n+1} - \phi^{n}\|_{L^2}^2 \nonumber\\
= &\  \epsilon \langle \Delta\phi^{n+1}, \phi^{n+1} - \phi^{n} \rangle - \dfrac{\kappa}{\epsilon}\langle\phi^{n+1},\phi^{n+1} - \phi^{n}\rangle - \gamma\beta \langle (-\Delta)^{-1}(\phi^{n+1}-\omega), \phi^{n+1} - \phi^{n}\rangle \nonumber\\
& + \dfrac{1}{\epsilon}\langle g'(\phi^{n}), \phi^{n+1} - \phi^{n}\rangle + \gamma \langle h'(\phi^n),\phi^{n+1} - \phi^{n}\rangle - M v(\phi^n) \langle f'(\phi^n), \phi^{n+1} - \phi^{n}\rangle \nonumber \\
= &\ \text{I + II + III + IV + V + VI}.
\end{align}
Using the identity $a\cdot (a-b) = \frac{1}{2}|a|^2 - \frac{1}{2}|b|^2 + \frac{1}{2}|a-b|^2$, we have:
\begin{align*}
\text{I} =&\ -\dfrac{\epsilon}{2}\left( \|\nabla\phi^{n+1}\|_{L^2}^2 - \|\nabla\phi^{n}\|_{L^2}^2 + \|\nabla\phi^{n+1}-\nabla\phi^n\|_{L^2}^2 \right); \\
 \text{II} =&\ -\dfrac{\kappa}{2\epsilon} \left(  \|\phi^{n+1}\|_{L^2}^2 - \|\phi^{n}\|_{L^2}^2 + \|\phi^{n+1}-\phi^n\|_{L^2}^2  \right);\\
 \text{III} =&\ -\gamma\beta \langle (-\Delta)^{-1}(\phi^{n+1}-\omega), (\phi^{n+1}-\omega) - (\phi^{n}-\omega)\rangle \\
 \quad=&\ -\gamma\beta \langle (-\Delta)^{-\frac{1}{2}}(\phi^{n+1}-\omega), (-\Delta)^{-\frac{1}{2}}(\phi^{n+1}-\omega) - (-\Delta)^{-\frac{1}{2}}(\phi^{n}-\omega)\rangle\\
 \quad =&\ -\dfrac{\gamma\beta}{2}\left( \|(-\Delta)^{-\frac{1}{2}}(\phi^{n+1}-\omega)\|_{L^2}^2 - \|(-\Delta)^{-\frac{1}{2}}(\phi^{n}-\omega)\|_{L^2}^2 + \|(-\Delta)^{-\frac{1}{2}}(\phi^{n+1}-\phi^n)\|_{L^2}^2 \right); \\
 \text{IV}=&\  \frac{1}{\epsilon}\left(\langle g(\phi^{n+1}),1\rangle - \langle g(\phi^{n}),1\rangle - \frac{g''(\xi^n)}{2}\|\phi^{n+1}-\phi^n\|_{L^2}^2 \right) ,\quad \text{for some}\ \xi^n\ \text{between}\ \phi^n\ \text{and}\ \phi^{n+1}.
\end{align*}
For the term V, note that
\begin{align*}
\text{V} = &\ \gamma \langle h'(\phi^n),\phi^{n+1} - \phi^{n}\rangle \\
= &\ \gamma\langle \beta(-\Delta)^{-1}(\phi^n-\omega),\phi^{n+1}-\phi^n\rangle - \gamma \langle (-\Delta)^{-1}(f(\phi^n)-\omega)f'(\phi^n), \phi^{n+1} - \phi^n \rangle,
\end{align*}
then we have
\begin{align*}
&\gamma\langle \beta(-\Delta)^{-1}(\phi^n-\omega),\phi^{n+1}-\phi^n\rangle \\
=&\ \gamma\beta \langle (-\Delta)^{-1}(\phi^{n}-\omega), (\phi^{n+1}-\omega) - (\phi^{n}-\omega)\rangle \\
=&\ \gamma\beta \langle (-\Delta)^{-\frac{1}{2}}(\phi^{n}-\omega), (-\Delta)^{-\frac{1}{2}}(\phi^{n+1}-\omega) - (-\Delta)^{-\frac{1}{2}}(\phi^{n}-\omega)\rangle\\
=&\ \dfrac{\gamma\beta}{2}\left( \|(-\Delta)^{-\frac{1}{2}}(\phi^{n+1}-\omega)\|_{L^2}^2 - \|(-\Delta)^{-\frac{1}{2}}(\phi^{n}-\omega)\|_{L^2}^2 - \|(-\Delta)^{-\frac{1}{2}}(\phi^{n+1}-\phi^n)\|_{L^2}^2 \right),
\end{align*}
and 
\begin{align*}
&-\gamma \left\langle (-\Delta)^{-1}(f(\phi^n)-\omega)f'(\phi^n), \phi^{n+1} - \phi^n \right\rangle \\
=&\ -\gamma \left\langle (-\Delta)^{-1}(f(\phi^{n})-\omega), f'(\phi^n)\phi^{n+1} - \phi^n\right\rangle \\
=&\ -\gamma \left\langle (-\Delta)^{-1}(f(\phi^{n})-\omega), f(\phi^{n+1}) - f(\phi^n) - \frac{f''(\eta^n)}{2}(\phi^{n+1}-\phi^n)^2\right\rangle \\
=&\ -\frac{\gamma}{2}\left( \|(-\Delta)^{-\frac{1}{2}}(f(\phi^{n+1})-\omega)\|_{L^2}^2 - \|(-\Delta)^{-\frac{1}{2}}(f(\phi^{n})-\omega)\|_{L^2}^2 -\|(-\Delta)^{-\frac{1}{2}}(f(\phi^{n+1})-f(\phi^n))\|_{L^2}^2 \right)\\
&\ + \gamma\left\langle (-\Delta)^{-1}(f(\phi^{n})-\omega), \frac{f''(\eta^n)}{2}(\phi^{n+1}-\phi^n)^2\right\rangle, \quad \text{for some}\ \eta^n\ \text{between}\ \phi^n\ \text{and}\ \phi^{n+1},
\end{align*}
where we use the identity $b\cdot (a-b) = \frac{1}{2}|a|^2 - \frac{1}{2}|b|^2 - \frac{1}{2}|a-b|^2$. For the term VI, we have
\begin{align*}
\text{VI} & = -Mv(\phi^n) \langle f'(\phi^n)(\phi^{n+1} - \phi^{n}),1\rangle \\
& = -Mv(\phi^n)\Big\langle (f(\phi^{n+1}) - \omega) - (f(\phi^n)-\omega) -\frac{f''(\eta^n)}{2}(\phi^{n+1}-\phi^n),1 \Big\rangle \\
& = -M v(\phi^n) (v(\phi^{n+1}) - v(\phi^n)) + \frac{M}{2} v(\phi^n) f''(\eta^n)\|\phi^{n+1}-\phi^n\|_{L^2}^2 \\
& = -\frac{M}{2} \left( |v(\phi^{n+1})|^2 - |v(\phi^{n})|^2 -|v(\phi^{n+1})-v(\phi^n)|^2 \right) + \frac{M}{2} v(\phi^n) f''(\eta^n)\|\phi^{n+1}-\phi^n\|_{L^2}^2.
\end{align*}
Inserting the equalities for I -VI back into (\ref{eqn:estimate1}) yields
\begin{align*}
&\dfrac{1}{\tau}\|\phi^{n+1}-\phi^n\|_{L^2}^2 + \frac{\epsilon}{2}\|\nabla\phi^{n+1}-\nabla\phi^n\|_{L^2}^2 + E[\phi^{n+1}] - E[\phi^n] \\
=& -\frac{\kappa}{2\epsilon}\|\phi^{n+1}-\phi^n\|_{L^2}^2 - \dfrac{\gamma\beta}{2}\|(-\Delta)^{-\frac{1}{2}}(\phi^{n+1}-\phi^n)\|_{L^2}^2 - \frac{g''(\xi^n)}{2\epsilon}\|\phi^{n+1}-\phi^n\|_{L^2}^2 \\
& - \dfrac{\gamma\beta}{2}\|(-\Delta)^{-\frac{1}{2}}(\phi^{n+1}-\phi^n)\|_{L^2}^2 + \frac{\gamma}{2}\|(-\Delta)^{-\frac{1}{2}}(f(\phi^{n+1})-f(\phi^n))\|_{L^2}^2 \\
& + \frac{\gamma}{2}\left\langle (-\Delta)^{-1}(f(\phi^{n})-\omega), f''(\eta^n)(\phi^{n+1}-\phi^n)^2\right\rangle + \frac{M}{2} \left(  \left| v(\phi^{n+1})-v(\phi^n)\right|^2 +  v(\phi^n) f''(\eta^n)\|\phi^{n+1}-\phi^n\|_{L^2}^2 \right)
\end{align*}
Note that $g''(\xi^n) = \kappa - W''(\xi^n)$, we further have
\begin{align*}
&\dfrac{1}{\tau}\|\phi^{n+1}-\phi^n\|_{L^2}^2 + \frac{\epsilon}{2}\|\nabla\phi^{n+1}-\nabla\phi^n\|_{L^2}^2 + E[\phi^{n+1}] - E[\phi^n] \\
=& -\frac{\kappa}{\epsilon}\|\phi^{n+1}-\phi^n\|_{L^2}^2 - \gamma\beta\|(-\Delta)^{-\frac{1}{2}}(\phi^{n+1}-\phi^n)\|_{L^2}^2 + \frac{W''(\xi^n)}{2\epsilon}\|\phi^{n+1}-\phi^n\|_{L^2}^2 \\
& + \frac{\gamma}{2}\|(-\Delta)^{-\frac{1}{2}}(f(\phi^{n+1})-f(\phi^n))\|_{L^2}^2 + \frac{\gamma}{2}\int_{\Omega} (-\Delta)^{-1}(f(\phi^{n})-\omega) f''(\eta^n)(\phi^{n+1}-\phi^n)^2 dx \\
&  + \frac{M}{2} \left| v(\phi^{n+1})-v(\phi^n)\right|^2 + \frac{M}{2} v(\phi^n) f''(\eta^n)\|\phi^{n+1}-\phi^n\|_{L^2}^2 \\
\le& -\frac{\kappa}{\epsilon}\|\phi^{n+1}-\phi^n\|_{L^2}^2 - \gamma\beta\|(-\Delta)^{-\frac{1}{2}}(\phi^{n+1}-\phi^n)\|_{L^2}^2 + \frac{L_W}{2\epsilon}\|\phi^{n+1}-\phi^n\|_{L^2}^2 + \frac{\gamma}{2}L_p^2 \|(-\Delta)^{-\frac{1}{2}}(\phi^{n+1}-\phi^n)\|_{L^2}^2 \\
& + \frac{\gamma}{2}L_f\|(-\Delta)^{-1}\|_{L^{\infty}}\|f(\phi^n)-\omega\|_{L^{\infty}} \|\phi^{n+1}-\phi^n\|_{L^2}^2 + \frac{M}{2} |\Omega|\left( L_p^2 + L_f \max\{\omega,1-\omega \} \right) \|\phi^{n+1}-\phi^n\|_{L^2}^2 \\
= & \left(-\frac{\kappa}{\epsilon} +\frac{L_W}{2\epsilon} + \frac{\gamma L_f}{2}\|(-\Delta)^{-1}\|_{L^{\infty}}\max\{\omega,1-\omega \} + \frac{M}{2} |\Omega|\left( L_p^2 + L_f \max\{\omega,1-\omega \} \right)  \right)  \|\phi^{n+1}-\phi^n\|_{L^2}^2 \\
& + \left(-\gamma\beta  +\frac{\gamma}{2}L_p^2\right) \|(-\Delta)^{-\frac{1}{2}}(\phi^{n+1}-\phi^n)\|_{L^2}^2 \le 0,
\end{align*}
where the last inequality is true provided (\ref{eqn:kappa_beta}) holds. Consequently it leads to the energy stability.
\end{proof}

\begin{remark}
Note that the linear splitting of the energy functional in \reff{linear_splitting} is not a convex splitting. More specifically, $E_l[\phi]$ is convex, but $E_n[\phi]$ is non-convex due to the nonlinearity of $f(\phi)$. Therefore we cannot prove the energy stability as it is done via Lemma 3.1 in \cite{JuLiQiaoZhang_MathComput2017} and Theorem 1.1 in \cite{WiseWangLowengrub_SINA2009}. On the other hand, we can still show the discrete energy dissipation law by noticing that the nonlinear function $f(\phi)$ in \reff{f_modified} is bounded in $|f''|$ and Lipschitz continuous, which is the key in the proof.
\end{remark}

\section{Energy stability for the fully discrete scheme}

In this section, we will use spectral approximation in space to construct the fully discrete scheme and analyze its energy stability as a discrete analogy of (\ref{eqn:energystability}).

\subsection{Spectral collocation approximation for spatial discretization}\label{sec:spectral_collocation}

We discretize the spatial operators by using the spectral collocation approximation. To this end, we adopt some notations for the spectral approximation as in \cite{JuLiQiaoZhang_MathComput2017, DuJuLiQiao_JCP2018}.

We consider $\Omega = [-X,X)\times[-Y,Y)\subset \mathbb{R}^2$. For the three-dimensional case, the notations can be defined in similar manner.  Let $N_x$ and $N_y$ be two positive even integers. We take $h_x = \frac{2X}{N_x}$ and $h_y = \frac{2Y}{N_y}$ and define $\Omega_h = \Omega\cap(h_x\mathbb{Z}\otimes h_y\mathbb{Z})$. We define the index sets:
\begin{align*}
S_h &= \left\{ (i,j)\in\mathbb{Z}^2 | 1\le i \le N_x, 1\le j \le N_y \right\}, \\
\hat{S}_h &= \left\{ (k,l)\in\mathbb{Z}^2 | -\frac{N_x}{2}+1\le k \le \frac{N_x}{2}, -\frac{N_y}{2}+1\le j \le \frac{N_y}{2} \right\}.
\end{align*}
Denote by $\mathcal{M}_h$ the collection of periodic grid functions on $\Omega_h$:
\begin{align*}
\mathcal{M}_h = \left\{ f: \Omega_h\rightarrow\Omega | f_{i+mN_x, j+nN_y} =f_{ij}, \forall (i,j)\in S_h, \forall(m,n)\in \mathbb{Z}^2 \right\}.
\end{align*}
For any $f,g\in\mathcal{M}_h$ and $\textbf{f} = (f^1,f^2)^T, \textbf{g} = (g^1,g^2)^T\in\mathcal{M}_h\times\mathcal{M}_h$, we define the discrete $L^2$ inner product $\langle\cdot,\cdot\rangle_h$ and discrete $L^2$ norm $\|\cdot\|_{h,L^2}$ and discrete $L^{\infty}$ norm $\|\cdot\|_{h, L^{\infty}}$ as follows:
\begin{align*}
\langle f, g\rangle_h &= h_x h_y \sum_{(i,j)\in S_h} f_{ij} g_{ij}, \quad \|f\|_{h,L^2} = \sqrt{\langle f, f \rangle_h}, \quad \|f\|_{h, L^{\infty}} = \max_{(i,j)\in S_h} |f_{ij}| ; \\
\langle \textbf{f}, \textbf{g}\rangle_h &= h_x h_y \sum_{(i,j)\in S_h} \left( f_{ij}^1 g_{ij}^1 + f_{ij}^2 g_{ij}^2 \right), \quad  \|\textbf{f}\|_{h,L^2} = \sqrt{\langle \textbf{f}, \textbf{f}\rangle_h}.
\end{align*}

For a function $f\in\mathcal{M}_h$, the 2D discrete Fourier transform (DFT) $\hat{f} = Pf$ is defined as:
\begin{align}
\hat{f}_{kl} =  \frac{1}{N_xN_y} \sum_{(i,j)\in S_h} f_{ij} \exp\left( -\text{i} \frac{k\pi}{X}x_i \right) \exp\left( -\text{i} \frac{l\pi}{Y}y_j \right), \quad (k,l)\in\hat{S}_h,
\end{align}
where `i' is the complex unity, and $x_i = -X + ih_x, y_j = -Y + jh_y, 1\le i \le N_x, 1\le j \le N_y$. The corresponding inverse DTF (iDFT) is given as:
\begin{align}\label{eqn:iDFT}
f_{ij} =  \sum_{(k,l)\in \hat{S}_h} \hat{f}_{kl} \exp\left( \text{i} \frac{k\pi}{X}x_i \right) \exp\left( \text{i} \frac{l\pi}{Y}y_j \right), \quad (i,j)\in S_h.
\end{align}

Let $\widehat{\mathcal{M}}_h = \{Pf | f\in\mathcal{M}_h\}$ and define the operators $\hat{D}_x, \hat{D}_y$ on $\widehat{\mathcal{M}}_h$ as
\begin{align}
(\hat{D}_x \hat{f})_{kl} = \left(  \frac{\text{i}k\pi}{X} \right) \hat{f}_{kl}, \quad (\hat{D}_y \hat{f})_{kl} = \left(  \frac{\text{i}l\pi}{Y} \right) \hat{f}_{kl},\quad (k,l)\in\hat{S}_h,
\end{align}
then the Fourier spectral approximations to the the spatial operators $\partial_x, \partial_{xx}$ can be written as
\begin{align*}
D_x = P^{-1} \hat{D}_x P,\quad D_y = P^{-1} \hat{D}_y P, \quad D_x^2 = P^{-1} \hat{D}_x^2 P, \quad D_y^2 = P^{-1} \hat{D}_y^2 P.
\end{align*}
For any $f\in\mathcal{M}_h$ and $\textbf{f} = (f^1,f^2)^T\in\mathcal{M}_h\times\mathcal{M}_h$, the discrete gradient, divergence and Laplace operators are given respectively by
\begin{align*}
\nabla_h f = (D_x f, D_y f)^T, \quad \nabla_h\cdot\text{f} = D_x f^1 + D_y f^2,\quad \Delta_h f = D_x^2 f + D_y^2 f = P^{-1}(\hat{D}_x^2 + \hat{D}_y^2) P f.
\end{align*}
Let $\mathring{\mathcal{M}}_h = \{f\in\mathcal{M}_h | \langle f, 1 \rangle_h = 0\}$ be the collections of all periodic grid functions with zero mean, we define $(-\Delta_h)^{-1}: \mathring{\mathcal{M}}_h\rightarrow \mathring{\mathcal{M}}_h$ as
\begin{align*}
(-\Delta_h)^{-1} f = u \Longleftrightarrow -\Delta_h u = f.
\end{align*}
More precisely in terms of DFT and iDFT, we define it as
\begin{align*}
(-\Delta_h)^{-1} f = -P^{-1}(\hat{D}_x^2 + \hat{D}_y^2)^{-1} P f = -P^{-1}
\begin{cases}
\left[\left( \frac{k\pi}{X} \right)^2 + \left( \frac{l\pi}{Y} \right)^2\right]^{-1}\hat{f}_{kl}, & (k,l)\ne (0,0) \\
0, & (k,l)= (0,0)
\end{cases}
\end{align*}
Note that $(-\Delta_h)^{-1}$ can be defined for $g\in\mathcal{M}_h\backslash \mathring{\mathcal{M}}_h$ by simply removing the $(0,0)$-mode $\hat{g}_{00}$, namely, $(-\Delta_h)^{-1} g : = (-\Delta_h)^{-1} (g-\hat{g}_{00})$ if $g\in\mathcal{M}_h\backslash \mathring{\mathcal{M}}_h$. Furthermore, we define $(-\Delta_h)^{-\frac{1}{2}}$ as follows
\begin{align*}
\|(-\Delta_h)^{-\frac{1}{2}} f\|_{h,L^2}^2 = \big\langle (-\Delta_h)^{-\frac{1}{2}} f , (-\Delta_h)^{-\frac{1}{2}} f \big\rangle_h = \left\langle (-\Delta_h)^{-1} f , f \right\rangle_h
\end{align*}

It is easy to verify the following discrete integration by parts formulas:
\begin{lemma}\label{lemma:discrete_formulas}
For any functions $f,g\in\mathcal{M}_h$ and $ \textbf{g} = (g^1,g^2)^T\in\mathcal{M}_h\times\mathcal{M}_h$, we have
\begin{align*}
\langle f, \nabla_h\cdot\mathbf{g}\rangle_h = - \langle \nabla_h f, \mathbf{g} \rangle_h, \quad \langle f, \Delta_h g\rangle_h = -\langle \nabla_h f , \nabla_h g \rangle_h = \langle \Delta_h f , g \rangle_h.
\end{align*}
\end{lemma}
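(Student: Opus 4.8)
The plan is to prove all three identities at once by passing to the discrete Fourier representation and using the discrete orthogonality relation
\begin{equation*}
\sum_{(i,j)\in S_h}\exp\!\left(\text{i}\tfrac{(k+k')\pi}{X}x_i\right)\exp\!\left(\text{i}\tfrac{(l+l')\pi}{Y}y_j\right)=N_xN_y\,\delta_{k+k',0}\,\delta_{l+l',0},
\end{equation*}
where the Kronecker deltas are understood modulo $N_x$ and $N_y$ respectively. Here the hypothesis that $N_x,N_y$ are even is used: since $x_i=-X+ih_x$, each exponential carries a factor $(-1)^{k+k'}$, which equals $1$ precisely when $k+k'$ is a multiple of $N_x$, so it does not spoil the orthogonality. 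Combined with the iDFT formula \reff{eqn:iDFT}, this yields the discrete Parseval identity $\langle f,g\rangle_h=|\Omega|\sum_{(k,l)\in\hat S_h}\hat f_{kl}\,\hat g_{-k,-l}$ for all $f,g\in\mathcal M_h$; note that no complex conjugation is needed because the $\delta$'s pair the mode $(k,l)$ with $(-k,-l)$.

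First I would treat the divergence identity. Writing $\nabla_h\cdot\mathbf g=D_xg^1+D_yg^2$ with $D_x=P^{-1}\hat D_xP$, $D_y=P^{-1}\hat D_yP$, one has $\widehat{D_xg^1}_{kl}=\tfrac{\text{i}k\pi}{X}\hat g^1_{kl}$ and $\widehat{D_yg^2}_{kl}=\tfrac{\text{i}l\pi}{Y}\hat g^2_{kl}$. Applying Parseval to $\langle f,D_xg^1\rangle_h+\langle f,D_yg^2\rangle_h$ and to $\langle D_xf,g^1\rangle_h+\langle D_yf,g^2\rangle_h$, the two expressions differ only in that the Fourier symbols $\text{i}k\pi/X$, $\text{i}l\pi/Y$ are evaluated at $(-k,-l)$ in the first and at $(k,l)$ in the second; since these symbols are odd, the first is the negative of the second, which gives $\langle f,\nabla_h\cdot\mathbf g\rangle_h=-\langle\nabla_h f,\mathbf g\rangle_h$. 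Equivalently, $\hat D_x,\hat D_y$ act as purely imaginary (hence skew-adjoint) multipliers.

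For the Laplacian, $\Delta_h=P^{-1}(\hat D_x^2+\hat D_y^2)P$ has the real, even symbol $-\bigl((k\pi/X)^2+(l\pi/Y)^2\bigr)$. By Parseval, $\langle f,\Delta_hg\rangle_h=-|\Omega|\sum_{(k,l)}\bigl((k\pi/X)^2+(l\pi/Y)^2\bigr)\hat f_{kl}\hat g_{-k,-l}$, which is visibly symmetric in $f$ and $g$, so $\langle f,\Delta_hg\rangle_h=\langle\Delta_hf,g\rangle_h$; and $-\langle\nabla_hf,\nabla_hg\rangle_h=-\langle D_xf,D_xg\rangle_h-\langle D_yf,D_yg\rangle_h$ produces the same expression, because pairing $(k,l)$ with $(-k,-l)$ turns $\tfrac{\text{i}k\pi}{X}\cdot\tfrac{\text{i}(-k)\pi}{X}$ into $(k\pi/X)^2$. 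A quicker but essentially equivalent route is to note that the collocation differentiation matrices $D_x,D_y$ are real and skew-symmetric (so $\Delta_h=D_x^2+D_y^2$ is real symmetric), which turns all three identities into elementary matrix transpositions.

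I do not expect a genuine obstacle — the statement is, as the paper says, easy to verify. The only places demanding a little care are the bookkeeping of the Nyquist indices $k=N_x/2$ and $l=N_y/2$, where $-k$ must be read modulo $N_x$ (identifying $-N_x/2$ with $N_x/2$) so that the paired contributions in the sums still cancel, and the explicit use of the evenness of $N_x,N_y$ in the orthogonality relation above. Everything else is a direct expansion in the DFT basis.
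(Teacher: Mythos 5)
Your overall strategy --- verifying the identities on the Fourier side via the discrete orthogonality/Parseval relation and the parity of the symbols --- is the natural one, and it is presumably what the authors have in mind (the paper offers no proof, only the assertion that the lemma is ``easy to verify''). In particular, the symmetry $\langle f,\Delta_h g\rangle_h=\langle\Delta_h f,g\rangle_h$ follows exactly as you say, because the symbol $-\bigl((k\pi/X)^2+(l\pi/Y)^2\bigr)$ of $\Delta_h$ is real and even on $\hat S_h$.

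However, the one step you defer to ``a little care'' --- the Nyquist bookkeeping --- is precisely where the argument breaks for the two identities involving $\nabla_h$, and your claim that identifying $-N_x/2$ with $N_x/2$ makes ``the paired contributions still cancel'' is not correct. In the discrete Parseval relation the mode $k=N_x/2$ is paired with \emph{itself} (since $k+k'=N_x\equiv 0$), so for the odd symbol $\text{i}k\pi/X$ the two Nyquist contributions to $\langle f,D_xg\rangle_h+\langle D_xf,g\rangle_h$ add rather than cancel. A one-dimensional check with $N_x=2$ makes this explicit: there $(D_xf)_j=(-1)^{j+1}(\text{i}\pi/X)\hat f_1$, so $\langle f,D_xg\rangle_h=+\langle D_xf,g\rangle_h=2h_x(\text{i}\pi/X)\hat f_1\hat g_1\neq-\langle D_xf,g\rangle_h$, and likewise $-\langle D_xf,D_xg\rangle_h=+2h_x(\pi/X)^2\hat f_1\hat g_1=-\langle f,\Delta_hg\rangle_h$. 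The same computation shows that $D_x$ is \emph{not} a real skew-symmetric matrix under the definitions of Section 3.1 (it maps the real Nyquist component of a real grid function to a purely imaginary one), so your ``quicker route'' via matrix transposition inherits the same gap. The standard repair is to adopt the usual pseudo-spectral convention that the first-order symbols are set to zero at $k=N_x/2$ and $l=N_y/2$ (equivalently, to restrict to grid functions with vanishing Nyquist coefficients); with that convention $D_x,D_y$ are genuinely real and skew-symmetric and your argument goes through verbatim. This is arguably a defect of the paper's definitions rather than of your idea, but as written the cancellation you invoke does not hold, so the convention must be stated for the proof to be complete.
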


Using Lemma \ref{lemma:discrete_formulas} and the definition of $(-\Delta_h)^{-1}$, we can easily verify that 
\begin{align*}
(-\Delta_h)(-\Delta_h)^{-1} (f-\hat{f}_{00}) =(-\Delta_h)^{-1}(-\Delta_h) f = f - \hat{f}_{00},\quad \forall f\in\mathcal{M}_h.
\end{align*}

We further define the discrete $H^s$ norms for $f\in\mathcal{M}_h$:
\begin{align}
\|f\|_{h, H^s}^2 = \sum_{(k,l)\in \hat{S}_h} \Big( 1+(k^2+l^2)^s \Big) |\hat{f}_{kl}|^2.
\end{align}

In what follows, we will provide an upper $L^{\infty}$ bound of the operator $(-\Delta_h)^{-1}$ to guarantee the energy stability for the fully discrete scheme \reff{eqn:full_discrete}. To this end, we need two lemmas, the first one is the discrete analogy of a Solobev embedding inequality, the second one is about the stability of the spectrally discrete Laplacian operator $-\Delta_h$. Note from the following that the first Lemma is a consequence of the standard Sobolev embedding theory, and the second Lemma is a simple consequence of the elliptic regularity, we will not repeat the detailed proofs in here.  Readers who are interested in the proofs can refer to Sobolev space books, for instance \cite{Adams_SobolevSpace}.

\begin{lemma}\label{lemma:Solobev}
For any functions $f \in\mathcal{M}_h$, we have
\begin{align}\label{eqn:discrete_Solobev_embedding}
\|f\|_{h,L^{\infty}} \le C_s \|f\|_{h,H^s}
\end{align}
provided $s > d/2, d = 2, 3$, where $C_s$ is a constant only depending on $s$ and independent of $h$.
\end{lemma}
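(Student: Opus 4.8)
The plan is to argue entirely on the Fourier side, exploiting the fact that the discrete $H^s$ norm has been defined \emph{directly} through the discrete Fourier coefficients, so that it carries no hidden grid-dependent constant. First I would fix an arbitrary index $(i,j)\in S_h$ and expand $f_{ij}$ by the inverse DFT \reff{eqn:iDFT}. Since every exponential factor appearing there has modulus one, the triangle inequality gives the pointwise estimate
\[
|f_{ij}| \;\le\; \sum_{(k,l)\in\hat{S}_h} |\hat{f}_{kl}| ,
\]
whose right-hand side is independent of $(i,j)$; taking the maximum over $S_h$ yields $\|f\|_{h,L^{\infty}} \le \sum_{(k,l)\in\hat{S}_h}|\hat{f}_{kl}|$.

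Next I would insert into each summand the trivial factor $1 = \big(1+(k^2+l^2)^s\big)^{1/2}\big(1+(k^2+l^2)^s\big)^{-1/2}$ and apply the Cauchy--Schwarz inequality, which gives
\[
\|f\|_{h,L^{\infty}} \;\le\; \Bigg(\sum_{(k,l)\in\hat{S}_h}\big(1+(k^2+l^2)^s\big)|\hat{f}_{kl}|^2\Bigg)^{\!1/2}\Bigg(\sum_{(k,l)\in\hat{S}_h}\frac{1}{1+(k^2+l^2)^s}\Bigg)^{\!1/2}.
\]
By the definition of $\|\cdot\|_{h,H^s}$ the first factor on the right is precisely $\|f\|_{h,H^s}$, so the whole statement reduces to bounding the second factor by a constant that does not depend on $h$, i.e.\ not on the number of collocation points in each direction.

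For this last step I would enlarge the finite index set $\hat{S}_h$ to the full integer lattice, obtaining
\[
\sum_{(k,l)\in\hat{S}_h}\frac{1}{1+(k^2+l^2)^s} \;\le\; \sum_{\mathbf{k}\in\mathbb{Z}^d}\frac{1}{1+|\mathbf{k}|^{2s}} \;=:\; C_s^2 ,
\]
a quantity manifestly independent of the discretization. Comparing this lattice series with the integral $\int_{\mathbb{R}^d}\big(1+|\xi|^{2s}\big)^{-1}\,d\xi$ — for instance by splitting $\mathbb{R}^d$ into the dyadic shells $2^{j}\le|\xi|<2^{j+1}$, $j\ge 0$ — shows that it converges exactly when $2s>d$, that is, $s>d/2$. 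Taking $C_s$ to be the square root of this sum then gives \reff{eqn:discrete_Solobev_embedding} with $C_s$ depending only on $s$ and $d$.

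The only point that genuinely requires care is this enlargement: it is essential to obtain the bound on the weight sum by passing to the infinite lattice rather than by estimating the finite sum over $\hat{S}_h$ directly, since only the former is visibly uniform in the grid; and the hypothesis $s>d/2$ is precisely the condition under which the resulting series is finite. Everything else is a one-line manipulation.
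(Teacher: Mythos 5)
Your proof is correct and is exactly the argument the paper has in mind: although the authors omit the details, the Remark following the lemma identifies $C_2^2=\sum_{(k,l)\in\mathbb{Z}^2}\bigl(1+(k^2+l^2)^2\bigr)^{-1}$, which is precisely the constant your Cauchy--Schwarz step produces. The inverse-DFT expansion, the weighted Cauchy--Schwarz against the definition of $\|\cdot\|_{h,H^s}$, and the enlargement of $\hat{S}_h$ to the full lattice (convergent exactly when $2s>d$) together give the claimed $h$-independent bound.
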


%

\begin{remark}
Taking $s = 2$ in the Lemma \ref{lemma:Solobev}, we can have an estimate on $C_2$:
\begin{align}
C_2^2 = \sum_{(k,l)\in \mathbb{Z}^2} \dfrac{1}{1+(k^2+l^2)^2} \le 1 + 4\sum_{k=1}^{\infty}\frac{1}{1+k^2} + \int_{\mathbb{R}^2 }  \frac{1}{1+(x^2+y^2)^2} dxdy \le 1 + 4\cdot\frac{\pi^2}{6} + \dfrac{\pi^2}{2}.
\end{align}
\end{remark}

We also have an uniform $L^{\infty}$ bound for the operator $(-\Delta_h)^{-1}$.
\begin{lemma}\label{lemma:invLap}
Let any $u, f \in\mathcal{M}_h$ be such that $-\Delta_h u = f$, then we have
\begin{align}\label{eqn:invLap_uniform_bound}
\|u\|_{h,L^{\infty}} \le C_2\sqrt{(1+C_p^4)|\Omega|}\ \|f\|_{h, L^\infty},
\end{align}
where $C_p \le \frac{\max\{X,Y\}}{\pi}$ is the discrete Poincare constant. In other words, $\|(-\Delta_h)^{-1}\|_{h, L^{\infty}}$ is uniformly bounded.
\end{lemma}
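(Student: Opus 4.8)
The plan is to chain together the discrete Sobolev embedding of Lemma \ref{lemma:Solobev} with $s=2$, an elliptic‐type estimate on the Fourier side, and the discrete Poincar\'e inequality. First I would write $u = (-\Delta_h)^{-1} f$ in terms of its discrete Fourier coefficients: since $-\Delta_h u = f$, we have $\hat u_{00} = 0$ and, for $(k,l)\neq(0,0)$,
\begin{align*}
\hat u_{kl} = \left[\left(\tfrac{k\pi}{X}\right)^2 + \left(\tfrac{l\pi}{Y}\right)^2\right]^{-1}\hat f_{kl}.
\end{align*}
Then by Lemma \ref{lemma:Solobev} with $s=2$, $\|u\|_{h,L^\infty} \le C_2\|u\|_{h,H^2}$, and by definition of the discrete $H^2$ norm,
\begin{align*}
\|u\|_{h,H^2}^2 = \sum_{(k,l)\in\hat S_h}\bigl(1 + (k^2+l^2)^2\bigr)|\hat u_{kl}|^2 .
\end{align*}

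Next I would bound the multiplier $\bigl(1+(k^2+l^2)^2\bigr)|\hat u_{kl}|^2$ pointwise in $(k,l)$. Writing $\mu_{kl} = (k\pi/X)^2 + (l\pi/Y)^2$, one has $|\hat u_{kl}|^2 = \mu_{kl}^{-2}|\hat f_{kl}|^2$ for $(k,l)\neq(0,0)$. Since $k^2 + l^2 \le \max\{X,Y\}^2\pi^{-2}\,\mu_{kl} = C_p^2\,\mu_{kl}$, we get $(k^2+l^2)^2 \le C_p^4\mu_{kl}^2$, hence
\begin{align*}
\bigl(1+(k^2+l^2)^2\bigr)|\hat u_{kl}|^2 = \bigl(\mu_{kl}^{-2} + \mu_{kl}^{-2}(k^2+l^2)^2\bigr)|\hat f_{kl}|^2 \le \bigl(\mu_{kl}^{-2} + C_p^4\bigr)|\hat f_{kl}|^2 .
\end{align*}
The lowest nonzero eigenvalue $\mu_{kl}$ is bounded below by $\pi^2/\max\{X,Y\}^2 = 1/C_p^2$, so $\mu_{kl}^{-2}\le C_p^4$, giving the clean bound $\bigl(1+(k^2+l^2)^2\bigr)|\hat u_{kl}|^2 \le (1+C_p^4)|\hat f_{kl}|^2$ for every mode (the $(0,0)$ mode contributes zero on the left since $\hat u_{00}=0$). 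Summing over $\hat S_h$ yields
\begin{align*}
\|u\|_{h,H^2}^2 \le (1+C_p^4)\sum_{(k,l)\in\hat S_h}|\hat f_{kl}|^2 = (1+C_p^4)\,\frac{1}{|\Omega|}\,\|f\|_{h,L^2}^2 \le (1+C_p^4)\,\frac{1}{|\Omega|}\,|\Omega|\,\|f\|_{h,L^\infty}^2,
\end{align*}
where the middle equality is the discrete Parseval identity relating $\sum|\hat f_{kl}|^2$ to $\|f\|_{h,L^2}^2$ (up to the normalization constant coming from the DFT convention, which produces the factor $|\Omega|^{-1}$), and the last step uses $\|f\|_{h,L^2}^2 \le |\Omega|\,\|f\|_{h,L^\infty}^2$. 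Combining with $\|u\|_{h,L^\infty}\le C_2\|u\|_{h,H^2}$ gives \reff{eqn:invLap_uniform_bound}. Since the right-hand side depends only on $C_2$, $C_p$ and $|\Omega|$ — all independent of $h$ — the operator norm $\|(-\Delta_h)^{-1}\|_{h,L^\infty}$ is uniformly bounded.

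The main point requiring care is the exact normalization in the discrete Parseval identity: with the DFT convention in \reff{eqn:iDFT} the relation between $\sum_{(k,l)}|\hat f_{kl}|^2$ and $\|f\|_{h,L^2}^2 = h_xh_y\sum_{(i,j)}|f_{ij}|^2$ carries a factor involving $N_xN_y$ and $h_xh_y$, i.e. $|\Omega| = 4XY = N_xN_y h_xh_y$; tracking this constant correctly is what produces the $\sqrt{|\Omega|}$ (rather than some $h$-dependent quantity) in the final bound, and is the only place an error could creep in. Everything else is a routine eigenvalue estimate for $-\Delta_h$ together with the already-established Lemma \ref{lemma:Solobev} and the stated bound $C_p \le \max\{X,Y\}/\pi$ on the discrete Poincar\'e constant.
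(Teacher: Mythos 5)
The paper never actually proves this lemma --- it is stated and then dismissed as ``a simple consequence of the elliptic regularity'' --- so there is no authorial proof to compare against. Your route (solve for $\hat u_{kl}=\mu_{kl}^{-1}\hat f_{kl}$ on the Fourier side, bound $\|u\|_{h,H^2}$ by an eigenvalue estimate, invoke Lemma \ref{lemma:Solobev} with $s=2$, and close with discrete Parseval and $\|f\|_{h,L^2}^2\le|\Omega|\,\|f\|_{h,L^\infty}^2$) is exactly the intended argument, and it does establish the substantive claim that $\|(-\Delta_h)^{-1}\|_{h,L^\infty}$ is bounded uniformly in $h$.

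Two constants are not tracked correctly, though. First, from $\mu_{kl}^{-2}\le C_p^4$ you conclude $\mu_{kl}^{-2}+C_p^4\le 1+C_p^4$; that step needs $\mu_{kl}^{-2}\le 1$, i.e.\ $C_p\le 1$ (equivalently $\max\{X,Y\}\le\pi$), which holds for the paper's $\Omega=[-1,1]^2$ but not in general --- without it you only get $2C_p^4$, which is harmless for uniform boundedness but is not the stated constant. Second, in your final chain the $|\Omega|^{-1}$ from Parseval cancels the $|\Omega|$ from $\|f\|_{h,L^2}^2\le|\Omega|\,\|f\|_{h,L^\infty}^2$, so what you have actually derived is $\|u\|_{h,L^\infty}\le C_2\sqrt{1+C_p^4}\,\|f\|_{h,L^\infty}$, not the bound \reff{eqn:invLap_uniform_bound} with the extra $\sqrt{|\Omega|}$; this implies the lemma only when $|\Omega|\ge 1$ (again true here, $|\Omega|=4$, but worth saying explicitly rather than presenting the two as identical). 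Neither issue touches the idea of the proof; both are bookkeeping.
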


%

\subsection{Energy stability for fully discrete semi-implicit scheme}

In this section, we use the spectral collocation approximations defined in Section \ref{sec:spectral_collocation} for the spatial discretization and construct the first order fully discrete semi-implicit scheme for the pAOCK equation (\ref{eqn:OKgradientflow_penalized}).

The first order fully discrete semi-implicit scheme for the pACOK equation (\ref{eqn:OKgradientflow_penalized}) reads: for $0\le n \le N-1$, find $\phi_h^{n+1} = (\phi_{ij}^{n+1})\in \mathcal{M}_h$ such that
\begin{align}\label{eqn:full_discrete}
\dfrac{\phi_h^{n+1}-\phi_h^n}{\tau}  = &\ \epsilon \Delta_h \phi_h^{n+1} - \dfrac{\kappa_h}{\epsilon}\phi_h^{n+1} - \gamma\beta_h(-\Delta_h)^{-1}(\phi_h^{n+1}-\omega) + \dfrac{1}{\epsilon}\left[\kappa_h\phi_h^n-W'(\phi_h^n)\right] \nonumber \\
& + \gamma\left[\beta_h(-\Delta_h)^{-1}(\phi_h^n-\omega) - (-\Delta_h)^{-1}(f(\phi_h^n)-\omega)f'(\phi_h^n)\right] - M \left[ \langle f(\phi_h^{n}),1\rangle_h - \omega|\Omega| \right] f'(\phi_h^n).
\end{align}
with $\phi_h^0 = (\phi_{ij}^0)\in\mathcal{M}_h$ being the given initial data, $\kappa_h$ and $\beta_h$ two stabilization constants.

It is similar as in (\ref{eqn:semi_implicit3}) to show the unconditional unique solvability for (\ref{eqn:full_discrete}). To this end, we rewrite the scheme as:
\begin{align}\label{eqn:full_discrete2}
\left( \left(\dfrac{1}{\tau} + \dfrac{\kappa_h}{\epsilon} \right) I - \epsilon\Delta_h + \gamma\beta_h(-\Delta_h)^{-1}  \right)\phi_h^{n+1} = F_h^n 
\end{align}
with 
\begin{align*}
F_h^n =\ &\frac{\phi_h^n}{\tau} + \dfrac{1}{\epsilon}\left[\kappa_h\phi_h^n-W'(\phi_h^n)\right] 
+ \gamma\left[\beta_h(-\Delta_h)^{-1}(\phi_h^n-\omega) - (-\Delta_h)^{-1}(f(\phi_h^n)-\omega)f'(\phi_h^n)\right] \\
&- M \left[ \langle f(\phi_h^{n}),1\rangle_h - \omega|\Omega| \right] f'(\phi_h^n).
\end{align*}
Then all the eigenvalues for the operator on the left hand side of (\ref{eqn:full_discrete2}) are
\begin{align}
\lambda_{kl} = 
\begin{cases}
\frac{1}{\tau} + \frac{\kappa_h}{\epsilon} + \epsilon\left(\left( \frac{k\pi}{X} \right)^2 + \left( \frac{l\pi}{Y} \right)^2\right) + \gamma\beta_h \left(\left( \frac{k\pi}{X} \right)^2 + \left( \frac{l\pi}{Y} \right)^2\right)^{-1}, & (k,l)\ne (0,0),\\
\frac{1}{\tau} + \frac{\kappa_h}{\epsilon}, & (k,l) = (0,0).
\end{cases}
\end{align}
Consequently, $\lambda_{kl}>0$, which implies the unconditional unique solvability of the fully discrete scheme (\ref{eqn:full_discrete}).

To consider the energy stability for the scheme (\ref{eqn:full_discrete}), let us define a discrete analogy of the energy (\ref{eqn:OKenergy_penalty}):
\begin{align}
E_h[\phi_h] = \frac{\epsilon}{2}\|\nabla_h\phi_h\|_{h,L^2}^2 + \frac{1}{\epsilon}\langle W(\phi_h), 1 \rangle_h + \frac{\gamma}{2}\|(-\Delta_h)^{-\frac{1}{2}}(f(\phi_h-\omega))\|_{h,L^2}^2 + \frac{M}{2}\left( \langle f(\phi_h), 1 \rangle_h - \omega|\Omega| \right)^2.
\end{align}

Then we have the following energy stability for the scheme  (\ref{eqn:full_discrete}):

\begin{theorem}\label{theorem:fullydiscrete_energystable}
For any $\tau>0$, the $\phi_h^{n+1}$ determined by the scheme  (\ref{eqn:full_discrete}) satisfies:
\begin{align}\label{eqn:energy_stability_discrete}
E_h[\phi_h^{n+1}] \le E_h[\phi_h^n],
\end{align}
provided that the constants $\kappa_h$ and $\beta_h$ satisfy
\begin{align}\label{eqn:kappah_betah}
\kappa_h \ge \frac{L_W}{2}  + \epsilon\left( \frac{\gamma L_f}{2}\|(-\Delta_h)^{-1}\|_{h, L^{\infty}}\max\{\omega,1-\omega \} + \frac{M}{2} |\Omega|\left( L_p^2 + L_f \max\{\omega,1-\omega \} \right) \right), \ \beta_h \ge \frac{L_p^2}{2}.
\end{align}
\end{theorem}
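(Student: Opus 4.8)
The plan is to transcribe, essentially line by line, the proof of Theorem \ref{theorem:semidiscrete_energystable} into the spectral-collocation setting, replacing each continuous operator by its discrete counterpart ($\Delta \to \Delta_h$, $(-\Delta)^{-1}\to(-\Delta_h)^{-1}$, $(-\Delta)^{-\frac12}\to(-\Delta_h)^{-\frac12}$) and each $L^2$ pairing by $\langle\cdot,\cdot\rangle_h$. I would first introduce the grid-function analogues $g(\phi_h^n)=\frac{\kappa_h}{2}(\phi_h^n)^2-W(\phi_h^n)$, $h'(\phi_h^n)=\beta_h(-\Delta_h)^{-1}(\phi_h^n-\omega)-(-\Delta_h)^{-1}(f(\phi_h^n)-\omega)f'(\phi_h^n)$ and $v(\phi_h^n)=\langle f(\phi_h^n),1\rangle_h-\omega|\Omega|$, exactly as in the semi-discrete proof. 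Taking the discrete inner product of \reff{eqn:full_discrete} with $\phi_h^{n+1}-\phi_h^n$ then produces the same six terms I,\dots,VI as in \reff{eqn:estimate1}, now with $h$-subscripts throughout.

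For the three implicit linear terms I would use Lemma \ref{lemma:discrete_formulas} to write $\langle\Delta_h\phi_h^{n+1},\phi_h^{n+1}-\phi_h^n\rangle_h=-\langle\nabla_h\phi_h^{n+1},\nabla_h(\phi_h^{n+1}-\phi_h^n)\rangle_h$, together with the self-adjointness of $(-\Delta_h)^{-1}$ and the definition of $(-\Delta_h)^{-\frac12}$ to recast term~III; the polarization identity $a\cdot(a-b)=\tfrac12|a|^2-\tfrac12|b|^2+\tfrac12|a-b|^2$ then turns I, II, III into the telescoping patterns $-\tfrac\epsilon2(\|\nabla_h\phi_h^{n+1}\|_{h,L^2}^2-\cdots)$, etc., plus nonnegative $\|\phi_h^{n+1}-\phi_h^n\|$-type remainders. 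A point that needs care here is that $\phi_h^{n+1}-\phi_h^n$ need not have zero mean, but this is harmless: $(-\Delta_h)^{-1}$ is only ever applied to $\phi_h^{n+1}-\omega$ and $f(\phi_h^n)-\omega$, so writing $\phi_h^{n+1}-\phi_h^n=(\phi_h^{n+1}-\omega)-(\phi_h^n-\omega)$ makes the cancellations literal, the constant $\omega$ merely shifting the (already discarded) zero mode.

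For the explicit nonlinear terms IV, V, VI I would apply the mean-value form of Taylor's theorem \emph{nodewise}: at each $(i,j)\in S_h$ there exist $\xi^n_{ij},\eta^n_{ij}$ between $\phi^n_{h,ij}$ and $\phi^{n+1}_{h,ij}$ with $W'(\phi^n_{ij})(\phi^{n+1}_{ij}-\phi^n_{ij})=W(\phi^{n+1}_{ij})-W(\phi^n_{ij})-\tfrac12 W''(\xi^n_{ij})(\phi^{n+1}_{ij}-\phi^n_{ij})^2$ and $f'(\phi^n_{ij})(\phi^{n+1}_{ij}-\phi^n_{ij})=f(\phi^{n+1}_{ij})-f(\phi^n_{ij})-\tfrac12 f''(\eta^n_{ij})(\phi^{n+1}_{ij}-\phi^n_{ij})^2$; substituting these and using the companion identity $b\cdot(a-b)=\tfrac12|a|^2-\tfrac12|b|^2-\tfrac12|a-b|^2$ on the pieces of V and VI carrying $(-\Delta_h)^{-1}$ and $v$, all telescoping contributions assemble into $E_h[\phi_h^{n+1}]-E_h[\phi_h^n]$ and I am left with an identity of exactly the same shape as in the semi-discrete case.

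It then remains to show the residual is $\le 0$ under \reff{eqn:kappah_betah}. For this I would invoke $|W''|\le L_W$, $|f''|\le L_f$, $|f'|\le L_p$ from the modified potentials; $0\le f\le 1$, hence $\|f(\phi_h^n)-\omega\|_{h,L^\infty}\le\max\{\omega,1-\omega\}$ and $|v(\phi_h^n)|\le|\Omega|\max\{\omega,1-\omega\}$; Lemma \ref{lemma:invLap}, which supplies the $h$-uniform bound $\|(-\Delta_h)^{-1}\|_{h,L^\infty}$ in place of $\|(-\Delta)^{-1}\|_{\infty}$; and $\|(-\Delta_h)^{-\frac12}(f(\phi_h^{n+1})-f(\phi_h^n))\|_{h,L^2}^2\le L_p^2\|(-\Delta_h)^{-\frac12}(\phi_h^{n+1}-\phi_h^n)\|_{h,L^2}^2$. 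Collecting coefficients, the residual reduces to
\begin{align*}
&\Big(-\tfrac{\kappa_h}{\epsilon}+\tfrac{L_W}{2\epsilon}+\tfrac{\gamma L_f}{2}\|(-\Delta_h)^{-1}\|_{h,L^\infty}\max\{\omega,1-\omega\}+\tfrac{M}{2}|\Omega|(L_p^2+L_f\max\{\omega,1-\omega\})\Big)\|\phi_h^{n+1}-\phi_h^n\|_{h,L^2}^2\\
&\quad+\Big(-\gamma\beta_h+\tfrac{\gamma}{2}L_p^2\Big)\|(-\Delta_h)^{-\frac12}(\phi_h^{n+1}-\phi_h^n)\|_{h,L^2}^2,
\end{align*}
and both coefficients are $\le 0$ precisely when \reff{eqn:kappah_betah} holds; since $\tfrac1\tau\|\phi_h^{n+1}-\phi_h^n\|_{h,L^2}^2\ge0$ and $\tfrac\epsilon2\|\nabla_h(\phi_h^{n+1}-\phi_h^n)\|_{h,L^2}^2\ge0$ sit on the left, \reff{eqn:energy_stability_discrete} follows. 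There is no conceptually new obstacle beyond Theorem \ref{theorem:semidiscrete_energystable}; the one place deserving care is ensuring the discrete operators obey the same summation-by-parts and self-adjointness identities as their continuous analogues (guaranteed by Lemma \ref{lemma:discrete_formulas} and the Fourier definitions) and that Lemma \ref{lemma:invLap} delivers a mesh-independent $L^\infty$ bound, without which the threshold in \reff{eqn:kappah_betah} would fail to be uniform in $h$.
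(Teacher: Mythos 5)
Your proposal is correct and follows essentially the same route as the paper's own proof: the paper likewise transcribes the semi-discrete argument term by term, pairing \reff{eqn:full_discrete} with $\phi_h^{n+1}-\phi_h^n$ in $\langle\cdot,\cdot\rangle_h$, using the two polarization identities, nodewise Taylor remainders $W''(\xi_h^n)$, $f''(\eta_h^n)$, Lemma \ref{lemma:discrete_formulas}, and Lemma \ref{lemma:invLap} to reach exactly the residual you display. No gaps.
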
 

\begin{proof}

The proof is similar to the one for Theorem \ref{theorem:semidiscrete_energystable}. To see how the discrete operators work in the proof, we will still present the techniques in detail.

Take the discrete $L^2$ inner product with $\phi_h^{n+1} - \phi_h^{n}$ on the two sides of (\ref{eqn:full_discrete}), we have
\begin{align}\label{eqn:full_discrete_1}
\frac{1}{\tau}\| \phi_h^{n+1} - \phi_h^{n} \|_{h,L^2}^2  = &\ \epsilon \left \langle \Delta_h \phi_h^{n+1} , \phi_h^{n+1} - \phi_h^{n} \right \rangle_h - \frac{\kappa_h}{\epsilon} \left \| \phi_h^{n+1} - \phi_h^{n}  \right \|_{h,L^2}^2 - \frac{1}{\epsilon}\left \langle W'(\phi_h^{n}), \phi_h^{n+1} - \phi_h^{n} \right \rangle_h  \nonumber \\
& - \gamma\beta_h \left \langle (-\Delta_h)^{-1}(\phi_h^{n+1}-\phi_h^n),\phi_h^{n+1}-\phi_h^n \right \rangle_h  - \gamma \left \langle (-\Delta_h)^{-1}(f(\phi_h^{n})-\omega)f'(\phi_h^n),\phi_h^{n+1} - \phi_h^{n} \right \rangle_h \nonumber \\
& - M \big( \left \langle f(\phi_h^{n}),1 \right \rangle_h - \omega|\Omega|\ \big) \left \langle f'(\phi_h^n), \phi_h^{n+1} - \phi_h^{n} \right \rangle_h \nonumber \\
= & \text{I} + \text{II} + \text{III} + \text{IV} + \text{V} + \text{VI}.
\end{align}
Using the identities $a(a-b)= \frac{1}{2}|a|^2 - \frac{1}{2}|b|^2 + \frac{1}{2}|a-b|^2$ and $b(a-b)= \frac{1}{2}|a|^2 - \frac{1}{2}|b|^2 - \frac{1}{2}|a-b|^2$ yields
\begin{align*}
\text{I} & = -\frac{\epsilon}{2}\left( \|\nabla_h\phi_h^{n+1}\|_{h,L^2}^2 - \|\nabla_h\phi_h^{n}\|_{h,L^2}^2 + \|\nabla_h\phi_h^{n+1}-\nabla_h\phi_h^{n}\|_{h,L^2}^2 \right); \\
\text{II} & = - \frac{\kappa_h}{\epsilon}\| \phi_h^{n+1} - \phi_h^{n} \|_{h,L^2}^2; \\
\text{III} & = -\frac{1}{\epsilon}\left\langle W(\phi_h^{n+1}),1\right\rangle_h + \frac{1}{\epsilon}\left\langle W(\phi_h^{n}),1\right\rangle_h + \frac{1}{2\epsilon}\left\langle W''(\xi_h^{n}), (\phi_h^{n+1}-\phi_h^n)^2 \right\rangle_h,\ \text{for some}\ \xi^n_h\ \text{between}\ \phi^n_h\ \text{and}\ \phi^{n+1}_h; \\
\text{IV} & = - \gamma\beta_h\left\| (-\Delta_h)^{-\frac{1}{2}}(\phi_h^{n+1}-\phi_h^n) \right\|_{h,L^2}^2; \\
\text{V} & = -\gamma \left\langle (-\Delta_h)^{-1}(f(\phi_h^{n})-\omega), f'(\phi_h^n)(\phi_h^{n+1} - \phi_h^{n})\right\rangle_h \\
& = - \gamma \left\langle (-\Delta_h)^{-1}(f(\phi_h^{n})-\omega), (f(\phi_h^{n+1}) - \omega) - (f(\phi_h^{n}) - \omega)\right\rangle_h \\ 
&\quad\ + \dfrac{\gamma}{2} \left\langle (-\Delta_h)^{-1}(f(\phi_h^{n})-\omega), f''(\eta^n_h)(\phi_h^{n+1}-\phi_h^n)^2 \right\rangle_h \\
& = -\frac{\gamma}{2}\left( \| (-\Delta_h)^{-\frac{1}{2}}(f(\phi_h^{n+1})-\omega) \|_{h,L^2}^2 -  \| (-\Delta_h)^{-\frac{1}{2}}(f(\phi_h^{n})-\omega) \|_{h,L^2}^2 -  \| (-\Delta_h)^{-\frac{1}{2}}(f(\phi_h^{n+1})-f(\phi_h^{n})) \|_{h,L^2}^2  \right) \\
& \quad\  + \dfrac{\gamma}{2} \left\langle (-\Delta_h)^{-1}(f(\phi_h^{n})-\omega)f''(\eta^n_h), (\phi_h^{n+1}-\phi_h^n)^2 \right\rangle_h, \ \text{for some}\ \eta^n_h\ \text{between}\ \phi^n_h\ \text{and}\ \phi^{n+1}_h; \\
\text{VI} & = -M \big( \langle f(\phi_h^{n}),1\rangle_h - \omega|\Omega|  \big) \left( \left\langle f(\phi_h^{n+1}), 1 \right\rangle_h - \omega|\Omega| - \left\langle f(\phi_h^{n}), 1 \right\rangle_h + \omega|\Omega|   \right) \\
&\quad\   + \dfrac{M}{2} \big( \langle f(\phi_h^{n}),1\rangle_h - \omega|\Omega| \big) \left\langle f''(\eta^n_h), (\phi_h^{n+1}-\phi_h^n)^2 \right\rangle_h \\
& = -\frac{M}{2}\left( \left( \langle f(\phi_h^{n+1}),1\rangle_h - \omega|\Omega| \right)^2 - \left( \langle f(\phi_h^{n}),1\rangle_h - \omega|\Omega| \right)^2 - \left( \langle f(\phi_h^{n+1})-f(\phi_h^n),1\rangle_h  \right)^2  \right) \\
& \quad\ + \dfrac{M}{2} \big( \langle f(\phi_h^{n}),1\rangle_h - \omega|\Omega| \big)  \left\langle f''(\eta^n_h), (\phi_h^{n+1}-\phi_h^n)^2 \right\rangle_h.
\end{align*}
Inserting the above equations for I-VI back into (\ref{eqn:full_discrete_1}) yields
\begin{align*}
&\frac{1}{\tau}\| \phi_h^{n+1} - \phi_h^{n} \|_{h,L^2}^2 + \frac{\epsilon}{2}\|\nabla_h\phi_h^{n+1}-\nabla_h\phi_h^{n}\|_{h,L^2}^2 + E_h[\phi_h^{n+1}] - E_h[\phi_h^n] \\
=& - \frac{\kappa_h}{\epsilon}\| \phi_h^{n+1} - \phi_h^{n} \|_{h,L^2}^2  - \gamma\beta_h \| (-\Delta_h)^{-\frac{1}{2}}(\phi_h^{n+1}-\phi_h^n) \|_{h,L^2}^2 + \frac{\gamma}{2}\| (-\Delta_h)^{-\frac{1}{2}}(f(\phi_h^{n+1})-f(\phi_h^{n})) \|_{h,L^2}^2 \\
&+ \frac{1}{2\epsilon}\left\langle W''(\xi^n_h) , (\phi_h^{n+1}-\phi_h^n)^2 \right\rangle_h + \dfrac{\gamma}{2} \left\langle (-\Delta_h)^{-1}(f(\phi_h^{n})-\omega)f''(\eta^n_h), (\phi_h^{n+1}-\phi_h^n)^2 \right\rangle_h \\
& + \frac{M}{2}\left( \langle f(\phi_h^{n+1})-f(\phi_h^n),1\rangle_h  \right)^2 + \dfrac{M}{2} \big( \langle f(\phi_h^{n}),1\rangle_h - \omega|\Omega| \big)  \langle f''(\eta^n_h), (\phi_h^{n+1}-\phi_h^n)^2\rangle_h \\
\le & \bigg\langle - \frac{\kappa_h}{\epsilon} + \frac{L_W}{2\epsilon}  + \dfrac{\gamma L_f}{2} \|(-\Delta_h)^{-1}\|_{h,L^{\infty}} \max\{\omega,1-\omega\} + \frac{M}{2}|\Omega| \left( L_p^2 + L_f \max\{\omega,1-\omega\} \right), ( \phi_h^{n+1} - \phi_h^{n} )^2 \bigg\rangle_h \\
& + \left( -\gamma\beta_h + \frac{\gamma}{2}L_p^2 \right) \| (-\Delta_h)^{-\frac{1}{2}}(\phi_h^{n+1}-\phi_h^n) \|_{h,L^2}^2 \le 0
\end{align*}
where the last inequality holds provided (\ref{eqn:kappah_betah}) holds. Therefore we obtain the discrete energy stability (\ref{eqn:energy_stability_discrete}).
\end{proof}

\begin{remark}
Note that $\beta$ and $\beta_h$ satisfy the same stability condition as shown in (\ref{eqn:kappa_beta}) and (\ref{eqn:kappah_betah}). The key difference between (\ref{eqn:kappa_beta}) and (\ref{eqn:kappah_betah}) is the restriction on $\kappa$ and $\kappa_h$ which is exclusively determined by the $L^{\infty}$ norm of the inverse Laplacian and its spectrally discrete counterpart. Therefore applying the concrete bound on $\|(-\Delta_h)^{-1}\|_{h,L^{\infty}}$ provided in Lemma \ref{lemma:invLap} leads to an estimate on the value of $\kappa_h$.
\end{remark}

\section{Numerical Experiments}

In this section, we will use our first order semi-implicit scheme (\ref{eqn:full_discrete}) to solve the pACOK equation (\ref{eqn:OKgradientflow_penalized}) coupled with periodic boundary condition. In this section, we fix $\Omega = [-1,1]^2\subset\mathbb{R}^2$, $N=N_x = N_y = 512$. Then we have $|\Omega| = 4$ and  $h=h_x = h_y = \frac{1}{256}$. We set the stopping criteria for our time iteration by
\begin{align}\label{eqn:stopping}
\dfrac{\|\phi^{n+1} - \phi^n \|_{h, L^{\infty}}}{\tau} \le \text{TOL} = 10^{-3}.
\end{align}
The penalty constant is taken as $M = 1000$. By knowing the estimates on  $L_p, L_W, L_f$ and $\|(-\Delta_h)^{-1}\|_{h,L^{\infty}}$, we take stabilization constants $\kappa_h=2000$ and $\beta_h = 2$ to guarantee the discrete energy stability for $\epsilon = 5h, 10h, 20h$. The volume fraction $\omega=0.15$ is fixed in all numerical experiments. Other parameters such as $\epsilon$, $\gamma$ and $\tau$ vary for different simulations.

\subsection{$f(\phi)$ maintains tanh profile }

As the first example, we take $\epsilon = 20h, \gamma = 100$ and the initial condition $\phi^0$ as the characteristic function of a disc $\{(x,y): x^2 + y^2 \le r_0^2\}$ in which $r_0 = \sqrt{\omega|\Omega|/\pi}$. Because of the radial symmetry, the solution $\phi^{n}$ will remain being radially symmetric. The time step size is taken uniformly as $\tau = 10^{-3}$. 

In this example, we compare the cases in which $f(\phi) = \phi$ (old model) and $f(\phi) = 6\phi^5 - 15\phi^4 + 10\phi^3$ (new model). Note that for the old model, the stabilized dynamics (\ref{eqn:full_discrete}) reduces to a dynamics with $f(\phi) = \phi$ and $\beta_h = 1$.

In Figure \ref{fig:phi_comparison}, we present the solutions $\phi$ and the corresponding force distributions along the cross section of $y = 0$. our new phase field implementation improves the old one in several aspects. (1) The new model displays a better hyperbolic tangent profile than the old one as seen in Figure \ref{fig:phi_comparison}(a). More specifically, the equilibrium phase field $\phi$ in the new model shows a desirable hyperbolic tangent shape which monotonically changes its value from 1 to 0, while the old model presents some unphysical feature away the interfacial region, where $\phi$ has deviations of $O(10^{-2})$ from 0 (inside the interface) and 1 (outside the interface). (2) The new model maintains the force localization near the interface as seen in Figure \ref{fig:phi_comparison}(b). In the old model, all the three forces have nonzero contributions everywhere in the domain. Of course, the derivation of $\phi$ can be mitigated by letting $\epsilon\rightarrow 0$. However, in real applications, especially in the high dimensional simulations ($d=2$ or $d=3$), $\epsilon$ has to remain relatively large to reduce the computational cost. Therefore, the new model is advantageous for keeping the hyperbolic tangent profile of $\phi$ and localizing the forces only near the interfaces even for a relatively large $\epsilon$. It is worth mentioning that the force localization due to $f(\phi) = 3\phi^2 - 2\phi^3$ occurs not only at the equilibrium, but in the entire gradient-flow dynamics. Therefore it can potentially be used to study non-equilibrium dynamics such as cell motion \cite{CamleyZhao_PRL2013,CamleyZhao_PRE2017}.

\begin{figure}[htbp]
\centerline{
\includegraphics[width=200mm]{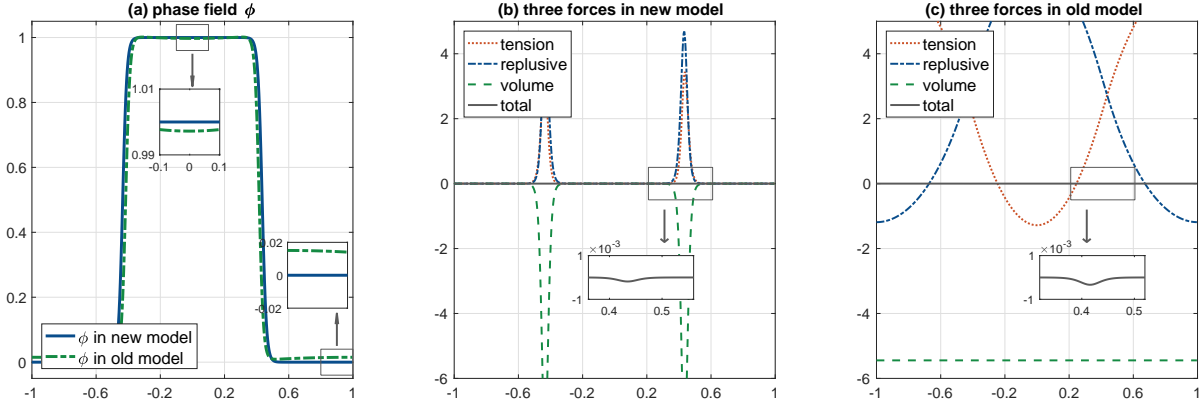}
$ \quad $ }
\vspace{-4 mm}
\caption{Numerical comparison between the new model and the old one. (a) The two phase-field functions $\phi$ at equilibrium in which the $\phi$ of new model presents a desirable hyperbolic tangent profile, but the $\phi$ of old model displays a deviation of $O(10^{-2})$ from 0 and 1 away from the interface as seen in the insets.  (b) The three forces in the new model (surface tension, repulsive force from nonlocal $(-\Delta)^{-1}$ term, and the volume force) are localized only near the interface and the sum is balanced up to $O(10^{-3})$.  (c) The three forces in the old model make nonzero contributions in the entire region. All the three subfigures are plotted by taking the cross section  at $y=0$ for the phase field $\phi$. In this simulation, $\gamma = 100, \epsilon = 20h$ and $\tau = 10^{-3}$.}
\label{fig:phi_comparison}
\end{figure}

We further test the convergence rates of the scheme (\ref{eqn:full_discrete}). To this end, we take a radially symmetric initial data $\phi^0 = 0.5+ 0.5 \tanh(\frac{r_0-r}{\epsilon/3})$ with  $r_0 = \sqrt{\omega|\Omega|/\pi}+0.1$. We perform the simulation until $T = 0.1$. We take the solution generated by the scheme (\ref{eqn:full_discrete}) with $\tau = 10^{-5}$ as the benchmark solution and then compute the discrete $L^2$ error between the numerical solutions with larger step sizes and benchmark one. Table \ref{table:rate} presents the errors and the convergence rates based on the data at $T = 0.1$ for the scheme  (\ref{eqn:full_discrete}) with time step sizes being halved from $\tau = 10^{-1}$ to $1.5625\times 10^{-3}$. We test the convergence rates for three different values of $\epsilon = 20h, 10h$ and $5h$. $\gamma = 100$ is fixed. We can see from the table that the numerically computed convergence rates all tend to approach the theoretical value 1. 

\begin{table}[H]
\begin{center}
\begin{tabular}{lllllllllllllr}
\hline
$\Delta t$ &  \multicolumn{2}{l}{$\epsilon = 20 h$} & & \multicolumn{2}{l}{$\epsilon = 10 h$} & &  \multicolumn{2}{l}{$\epsilon = 5 h$} \\
\cline{2-3} \cline{5-6} \cline{8-9}
    & Error & Rate & &  Error & Rate & & Error & Rate \\
\hline
$1.0000\times 10^{-1}$           & 1.9791   & --     & &  2.0670 & --      & &  2.3193 & -- \\
$5.0000\times 10^{-2}$           & 1.3012   & 0.605 & &  1.4223 & 0.540 & &  1.6309 & 0.508 \\
$2.5000\times 10^{-2}$          & 0.8255   & 0.657 & &  0.9547 & 0.575 & &  1.1408 & 0.516\\
$1.2500\times 10^{-2}$          & 0.5146   & 0.682 & &  0.6134 & 0.638 & &  0.7905 & 0.529 \\
$6.2500\times 10^{-3}$          & 0.3217   & 0.678 & &  0.3680 & 0.737 & &  0.5395 & 0.551 \\
$3.1250\times 10^{-3}$          & 0.1907   & 0.755 & & 0.2017 & 0.867 & &  0.3611 & 0.579 \\
$1.5625\times 10^{-3}$          & 0.0920   & 1.052 & & 0.0981 & 1.040 & &  0.1902 & 0.925\\
$10^{-5}$ (Benchmark)        & --   & -- & &  -- & -- & &   -- & -- \\
\hline
\end{tabular}\label{table:rate}
\caption{The errors and the corresponding convergence rates at time $T=0.1$ by the scheme  (\ref{eqn:full_discrete}) for different values of $\epsilon$. In this simulation, $\gamma = 100$ is fixed.}
\end{center}
\end{table}

\subsection{Coarsening dynamics}

\begin{figure}[htbp]
\centerline{
\includegraphics[width=150mm]{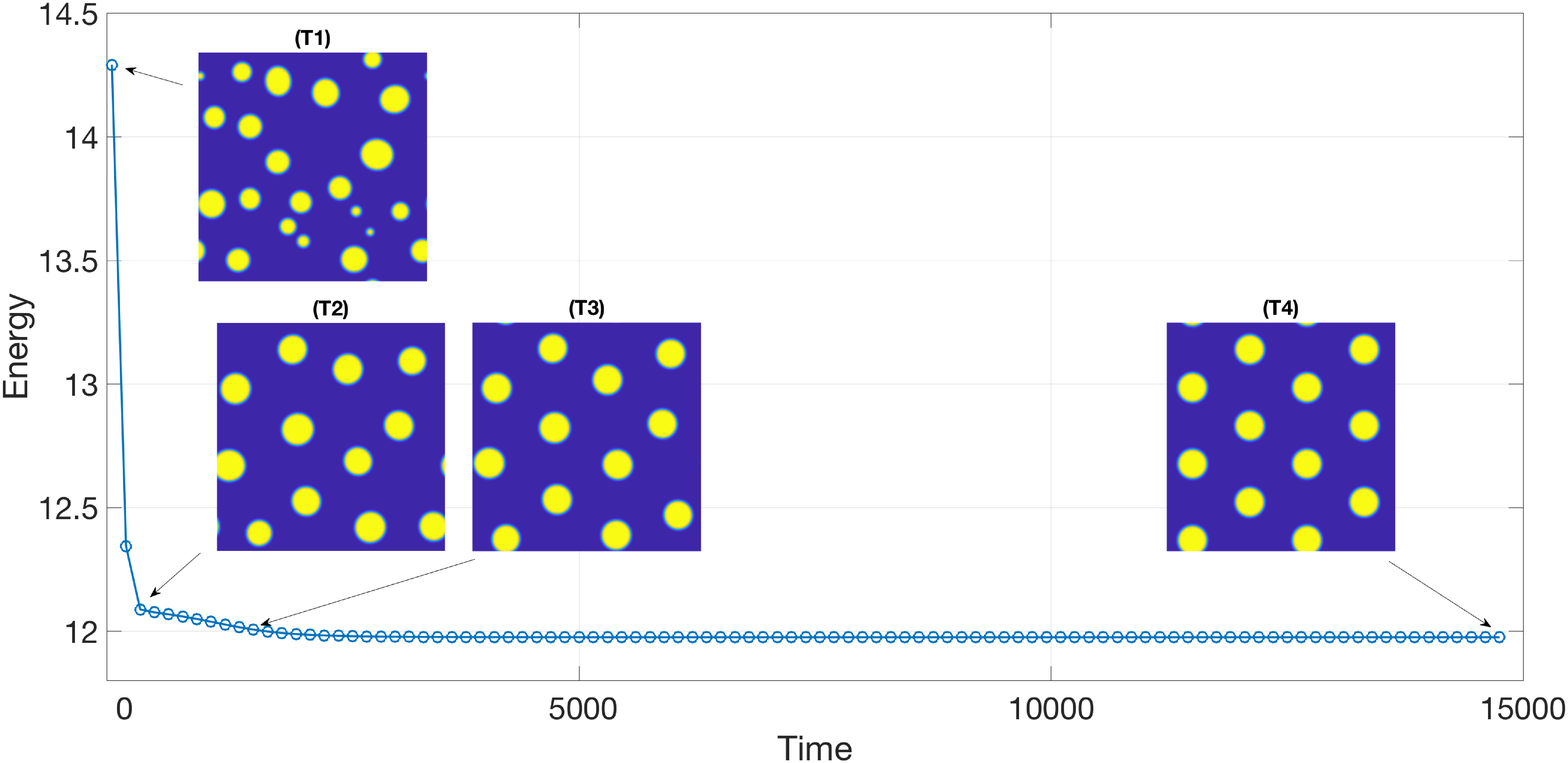}
 }
 \centerline{
\includegraphics[width=150mm]{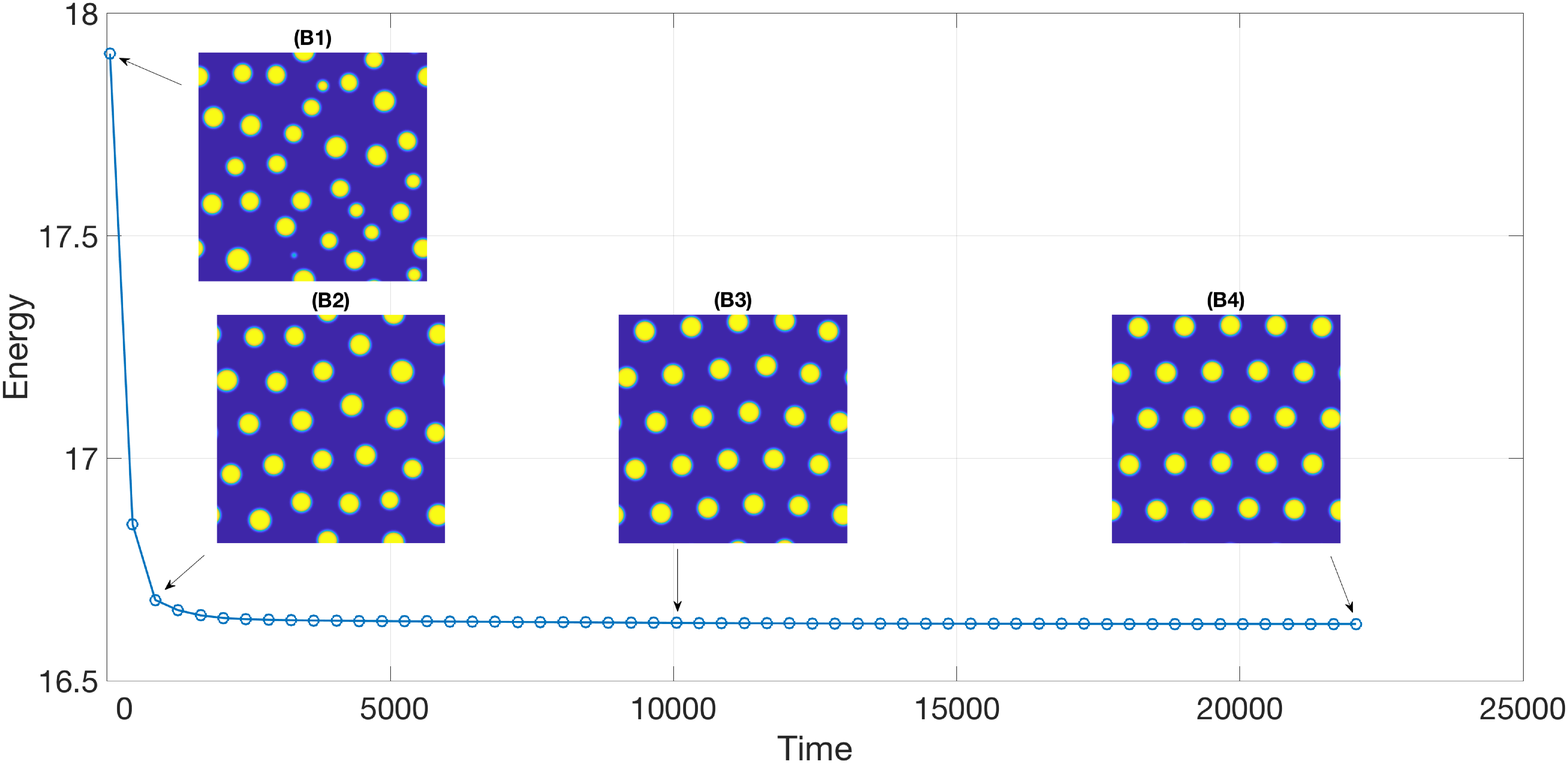}
 }
\caption{Two coarsening dynamic processes show the system experiences phase separation from a random initial, then bubbles appear from coarsening, evolve into same size, and finally form hexagonal structure. (Top): $\gamma = 2000$; (Bottom): $\gamma = 5000$. Other parameter values are $\tau = 5\times 10^{-3}$, $\omega = 0.15$, $\epsilon = 10h$.}
\label{fig:coarsening}
\end{figure}

In the second example, we consider the dynamics of the pACOK equation using the scheme (\ref{eqn:full_discrete}). When taking $\omega\ll 1$ and a relatively large $\gamma$, the pACOK equation results in an equilibrium of so-called bubble assembly in which one phase ($A$-species) is embedded into the other one ($B$-species).

The initial configuration is a random state given by random numbers varying uniformly in [0,1] on a coarse uniform mesh of $\Omega = [-1,1]^2$ with mesh size being $16h$. In Matlab, this random state can be easily generated by \texttt{repelem(rand(N/ratio,N/ratio),ratio,ratio)} in which the \texttt{ratio} stands for the ratio of the mesh size between the coarse and fine meshes. We take $\tau = 5\times 10^{-3}$ and terminate the simulation when the stopping condition (\ref{eqn:stopping}) is satisfied. We have tested that for different values of \texttt{ratio}, as long as \texttt{ratio}$\cdot h$ is of the size no larger than the bubble size, pACOK equation leads to bubble assemblies with the same number of bubbles. On the other hand, the larger the  \texttt{ratio} is, the faster the coarsening dynamics is. To capture the coarsening dynamics yet expedite the simulation, we choose \texttt{ratio}=16. 

Figure \ref{fig:coarsening} presents two pACOK coarsening dynamics with $\gamma = 2000$ and $5000$ respectively. For each subfigure, we start from a random state generated by \texttt{repelem}. In a very short time period, the random state is coarsen and bubbles of different size appear as shown in the inset (T1) ((B1), respectively). Then small bubbles disappear from (T1) to (T2) ( (B1) to (B2), respectively). Interestingly the free energy at (T2) ((B2), respectively) is the spot with the largest curvature along the energy curve. This is a critical spot in the energy decay. Before this spot, the coarsening dynamics drives the small bubbles merging with large ones, then all bubbles grow into the same size, and the number of bubbles become fixed at this spot. After this spot, bubbles of same size move around and eventually form a hexagonal structure as shown from (T3) to (T4) ((B3) to (B4), respectively). In addition, the energy curve clearly shows that the proposed linear semi-implicit scheme (\ref{eqn:full_discrete}) inherits the feature of energy dissipation.


\subsection{$\gamma$ controls number of bubbles}

\begin{figure}[htbp]
\centerline{
\includegraphics[width=140mm]{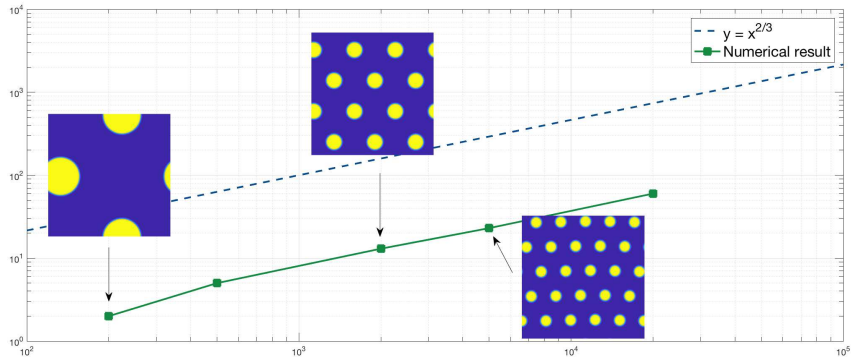}
$ \quad $ }
\vspace{-4 mm}
\caption{Log-log plot of the dependence of the number of bubbles on $\gamma$ in the bubble assemblies. Here $\omega = 0.15$. As $\gamma$ increases, the number of bubbles $K_b$ in the assemblies grows accordingly. For $\gamma = 200,500,2000,5000,20000$, the corresponding number of double bubbles are $K_b =  2, 5, 13, 23, 60$, respectively. The three insets are equilibrium states for $\gamma = 200, 2000, 20000$. The number of bubbles obeys the 2/3 law $K_b \sim \gamma^{2/3}$. In this simulation, $\epsilon = 10h$ and $\tau = 5\times 10^{-3}$. }
\label{fig:bubble_number}
\end{figure}

The parameter $\gamma$ plays a critical role in the bubble assembly. $\gamma$ measures the strength of the repulsion (long-range interaction) between bubbles and favors small domains, therefore, the larger $\gamma$ is, the more bubbles the assembly has. In the third example, we fix $\omega = 0.15$ but take a few values of $\gamma = 200, 500, 2000, 5000, 20000$.  In Figure \ref{fig:bubble_number} we see that as $\gamma$ increases, the number of bubbles $K_b$ change as $K_b = 2, 5, 13, 23, 60$. In order to diminish the error of bubble counting caused by the random state, we run five simulations for each value of $\gamma$, count number of bubbles and take average. We find that the number of bubbles remains the same regardless of different initial random states. More importantly, the relation between the number of bubbles $K_b$ and the repulsion strength $\gamma$ obeys the 2/3-law $K_b \sim \gamma^{2/3}$ which agrees with the theoretical studies \cite{RenWei_RMP2007}.

\section{Concluding remarks}

In this paper, we study a first order stabilized linear semi-immplicit scheme for the pACOK equation. by the introduction of a new term $f(\phi)$ in the OK free energy functional, we can localize the forces near the interface and maintain the solution as the desire hyperbolic tangent profile in the entire pACOK dynamics. We prove the energy stability in the semi-discrete and fully discrete schemes in which the stabilizers $\kappa_h$ and $\beta_h$ depend on the bounds of the second order derivatives of the nonlinear terms $W$ and $f$ as well as the Lipschitz continuity of $f$, or equivalently depend on the discrete $L^{\infty}$ bound of the numerical solutions.

In the numerical simulations, we validate the functionality of $f(\phi)$ on the force localization as we proposed in Section \ref{sec:Introduction}. When the volume fraction of $A$ species is much smaller than that of $B$ species, we find out that the pACOK dynamics leads to the pattern of hexagonal bubble assemblies. More important, we numerically verify that the number of bubbles in the hexagonal patterns is determined by the repulsion strength $\gamma$ through a two third law, which is consistent with the recent theoretical study.

We are currently working on a more general system of $N+1$ constituents which is described by a free energy functional:
\begin{align}\label{Energy_Ncomp}
E^N[\phi_1, \cdots, \phi_N] = &\int_{\Omega} \dfrac{\epsilon}{2} \sum_{\substack{i,j=0\\ i\le j}}^{N} \nabla\phi_i\cdot\nabla\phi_j + \dfrac{1}{2\epsilon} \left[ \sum_{i=1}^N W(\phi_i) + W\Big(1-\sum_{i=1}^N \phi_i\Big)  \right] dx \nonumber \\
&\quad + \sum_{i,j = 1}^N \frac{\gamma_{ij}}{2}  \int_{\Omega} \left[ \left(-\triangle\right)^{-\frac{1}{2}}  \left( f(\phi_{i}) -\omega_{i}\right) \left(-\triangle\right)^{-\frac{1}{2}}\left( f(\phi_{j}) -\omega_{j}\right) \right] dx,
\end{align}
where $\omega_i = \frac{1}{|\Omega|}\int_{\Omega} f(\phi_i) dx, i = 1,\cdots, N$ represent the volume constraints for each constituent $\phi_i$. When $N=2$, equation (\ref{Energy_Ncomp}) agrees with the ternary system which has been studied recently in \cite{NakazawaOhta_Macromolecules1993,RenWei_PhysD2003,RenWei_ARMA2015,WangRenZhao_CMS2018}. We try to incorporate the energy stable scheme into this system to better describe the bubble assemblies and possibly explore other interesting patterns. 

In this paper, our main focus is the first order stabilized linear semi-implicit scheme. In the future, we will conduct the error estimates for the fully discrete schemes. Moreover, we will investigate some higher order schemes and perform energy stability analysis if possible. Some other numerical methods, such as exponential time differencing based schemes, coupled with Fourier spectral discretization in space will also be under our consideration in the future as it is an efficient and stable numerical method which has been successfully applied to other gradient flow type dynamics \cite{JuLiQiaoZhang_MathComput2017, WangJuDu_JCP2016, Zhao_2018CMS}.

\section{Acknowledgements}

Y. Zhao's work is supported by Columbian College Facilitating Funds (CCFF 2018) of George Washington University and a grant from the Simons Foundation through Grant No. 357963.

\newpage

\section*{References}

\bibliography{OhtaKawasaki}

\end{document}